\documentclass{amsproc}

\usepackage[ansinew]{inputenc}
\usepackage{graphicx}
\usepackage{color}

\usepackage{enumerate,latexsym}
\usepackage{latexsym}
\usepackage{amsmath,amssymb}
\usepackage{graphicx}
\usepackage{times}

\newfont{\bb}{msbm10 at 11pt}
\newfont{\bbsmall}{msbm8 at 8pt}

\def\rth{\mathbb{R}^3}
\def\R{\mathbb{R}}

\def\N{\mathbb{N}}

\def\Hip{\mathbb{H}}

\def\esf{\mathbb{S}}

\newcommand{\la}{\looparrowright}

\newcommand{\ben}{\begin{enumerate}}
\newcommand{\bit}{\begin{itemize}}
\newcommand{\een}{\end{enumerate}}
\newcommand{\eit}{\end{itemize}}
\newcommand{\wh}{\widehat}
\newcommand{\Int}{\mbox{\rm Int}}

\newcommand{\wt}{\widetilde}

\newcommand{\sol}{\mathrm{Sol}_3}
\newcommand{\ed}{\end{document}}

\def\a{{\alpha}}

\def\t{{\theta}}

\def\G{{\Gamma}}

\def\be{{\beta}}

\def\cB{\mathcal{B}}

\def\cM{\mathcal{M}}

\let\8=\infty \let\0=\emptyset

\def\flecha{\rightarrow}
\def\esiz{\langle}
\def\esde{\rangle}

\def\cte.{\mathop{\rm cte.}\nolimits}

\def\N{\mathbb{N}}

\def\R{\mathbb{R}}

\def\H{\mathbb{H}}
\def\S{\mathbb{S}}
\def\SS{\Sigma}

\newtheorem{theorem}{Theorem}[section]
\newtheorem{lemma}[theorem]{Lemma}
\newtheorem{proposition}[theorem]{Proposition}

\newtheorem{remark}[theorem]{Remark}
\newtheorem{corollary}[theorem]{Corollary}
\newtheorem{definition}[theorem]{Definition}

\newtheorem{example}[theorem]{Example}

\textheight 22cm
\topmargin -.1cm




\newcommand{\su}{{\rm SU}(2)}
\newcommand{\EE}{\wt{\mathrm E}(2)}

\renewcommand{\sl}{\wt{\mathrm{SL}}(2,\R)}

\numberwithin{equation}{section}

\begin{document}

\begin{title}
{Embeddedness of spheres in homogeneous three-manifolds}
\end{title}
\today

\author{William H. Meeks III}
\address{William H. Meeks III, Mathematics Department,
University of Massachusetts, Amherst, MA 01003}
\email{profmeeks@gmail.com}
\thanks{The first author was supported in part by NSF Grant DMS -
   1004003. Any opinions, findings, and conclusions or recommendations
   expressed in this publication are those of the authors and do not
   necessarily reflect the views of the NSF}
\author{Pablo Mira}
\address{Pablo Mira, Department of
Applied Mathematics and Statistics, Universidad Polit\'ecnica de
Cartagena, E-30203 Cartagena, Murcia, Spain.}
\email{pablo.mira@upct.es}
\thanks{The second author was partially supported by the
MICINN-FEDER grant no. MTM2013-43970-P, and Programa de Apoyo
a la Investigacion, Fundacion Seneca-Agencia de Ciencia y
Tecnologia Region de Murcia, reference 19461/PI/14}

\author{Joaqu\'\i n P\'erez}
\address{Joaqu\'\i n P\'erez, Department of Geometry and Topology,
University of Granada, 18001 Granada, Spain}
 \email{jperez@ugr.es}

\thanks{The third author was partially
supported by the MINECO/FEDER grant no. MTM2014-52368-P}

\subjclass{Primary 53A10; Secondary 49Q05, 53C42}


\keywords{constant mean curvature,
metric Lie group,  algebraic open book decomposition,
homogeneous three-manifold, left invariant metric, left invariant
Gauss map.}

\begin{abstract}
Let $X$ denote a metric Lie group diffeomorphic
to $\R^3$ that admits an algebraic open book decomposition.
In this paper we prove that if $\Sigma$ is an immersed surface
in $X$ whose left invariant Gauss map is a diffeomorphism onto
$\S^2$, then $\Sigma$ is an embedded sphere. As a consequence,
we deduce that any constant mean curvature sphere of index one
in $X$ is embedded.
\end{abstract}

\maketitle

\section{Introduction.}  \label{sec:introduction}

Let $Y$ denote a simply connected, homogeneous Riemannian
three-manifold, and assume that $Y$ is not isometric to a
Riemannian product  $\S^2(\kappa)\times \R$, where $\S^2(\kappa)$ is the two-sphere
with a metric of constant Gaussian curvature $\kappa>0$. Then,
$Y$ is isometric to a \emph{metric Lie group} $X=(G,\esiz,\esde)$, i.e.,
a simply connected, three-dimensional Lie group
$G$ equipped with a left invariant metric $\esiz,\esde$.

In this paper we consider immersed oriented surfaces in metric Lie groups, and we
study them in terms of their \emph{left invariant Gauss map}, which we define next.

Let $\psi\colon \Sigma\looparrowright  X$ be an immersed oriented surface in a metric
Lie group $X$, and let $N\colon \Sigma\flecha TX$ denote its unit normal vector field. Note
that for any $x\in X$, the left translation $l_x \colon X\flecha X$ is an
isometry of $X$. Thus, for every $p\in \Sigma$ there exists a unique
unit vector $G(p)$ in the tangent space  $T_e X$, such that
\[
\left( dl_{\psi(p)}\right) _e (G(p)) = N(p), \hspace{1cm }\forall p\in \Sigma,
\]
where $e$ denotes the identity element of $X$.

\begin{definition}\label{defG}
We call the map $G\colon \Sigma\flecha \S^2=\{v\in T_e X \, \mid \, |v|=1\}$
the \emph{left invariant Gauss map} of the
oriented surface $\psi\colon \Sigma\looparrowright  X$.
\end{definition}

Note that if $X$ is the Euclidean space $\R^3$ endowed with its
usual abelian Lie group structure, the left invariant Gauss map
is just the usual Gauss map for oriented surfaces in $\R^3$. In this situation,
it is well known that if $\Sigma$ is a surface in $\R^3$ whose Gauss map
is a diffeomorphism onto $\S^2$, then $\Sigma$ bounds a strictly convex domain of $\R^3$;
in particular, $\Sigma$ is an embedded topological sphere.

This embeddedness property does not hold in the general context
of metric Lie groups. For instance, for certain metric Lie groups $X$
diffeomorphic to $\S^3$, there exist immersed, non-embedded spheres in
$X$ whose left invariant Gauss maps are diffeomorphisms onto $\S^2$
(see Remark~\ref{torralbo}). It is then natural to investigate for which
metric Lie groups $X$ it is true that any sphere in $X$ whose left invariant
Gauss map is a diffeomorphism must be embedded.

In this paper we give an affirmative answer to the embeddedness question above for a
certain class of metric Lie groups, namely, for those admitting \emph{algebraic open
book decompositions} (see Definition~\ref{def:al-book}).

\begin{theorem}
 \label{main}
Let $X$ be a metric Lie group which admits an algebraic open book
decomposition, and let $f\colon  S\looparrowright X$ be an immersion of a
sphere whose
left invariant Gauss map $G\colon  S\to \esf^2$
is a diffeomorphism. Then $f(S)$ is an embedded sphere (i.e., $f$ is an
injective immersion).
\end{theorem}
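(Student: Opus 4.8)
The plan is to exploit the algebraic open book decomposition of $X$ to produce a foliation-like structure against which the surface $f(S)$ can be tested. Recall that such a decomposition gives a one-parameter family of pages $\{\mathcal{H}_\theta\}_{\theta\in\S^1}$, each a half-plane-type leaf, all sharing a common binding geodesic $\gamma$, and these pages sweep out $X$. The key point is that each page is a particular algebraic surface whose own left invariant Gauss map image is a controlled great-circle-like subset of $\S^2$ (typically a half great circle, or the full $\S^2$ minus its endpoints depending on the model), and the binding $\gamma$ is a one-parameter subgroup or a coset thereof. First I would record precisely, from the definition of algebraic open book decomposition, the geometry of the pages and binding: that left translations along $\gamma$ preserve the book, that each page is topologically a half-open disk, and that the Gauss map of each page covers a prescribed arc. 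This is the structural input that replaces the convexity argument available in $\R^3$.

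**The core argument via the Gauss map and transversality.**

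Since $G\colon S\to\S^2$ is a diffeomorphism, $S$ carries a well-defined ``height'' function for each page: for a fixed page $\mathcal{H}_{\theta}$, the set of points of $S$ where $f$ is tangent to a translate of $\mathcal{H}_\theta$ is exactly $G^{-1}(\text{the arc } A_\theta\subset\S^2)$ traced by the page's own Gauss image. Because $G$ is a diffeomorphism and $A_\theta$ is an embedded arc (or circle) in $\S^2$, this preimage is an embedded arc (or circle) in $S$; call it $\alpha_\theta$. The strategy is then to show that $f$ restricted to $\alpha_\theta$ is an embedding into a single translated page, and then to let $\theta$ vary: as $\theta$ sweeps $\S^1$, the arcs $\alpha_\theta$ foliate $S$ (minus the two points mapping to the endpoints of the arcs, which are the ``poles'' where $f$ is tangent to the binding direction), and the images $f(\alpha_\theta)$ sit inside the pages of the book. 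If each $f(\alpha_\theta)$ is embedded and these curves are pairwise disjoint for distinct $\theta$ — which should follow because distinct pages meet only along the binding $\gamma$, and the two poles of $S$ are the only points that can map near $\gamma$ — then $f$ is globally injective, hence an embedding, and $f(S)$ is an embedded sphere.

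**Making the per-page statement rigorous.**

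The substance is the claim that $f$ maps each $\alpha_\theta$ injectively into one translate of the page, and that the ``stacking'' over $\theta$ is monotone. For this I would argue on each page separately using a two-dimensional maximum-principle / winding-number argument: the curve $f(\alpha_\theta)$ lies in a translated page $\mathcal{H}$ (a surface diffeomorphic to a half-plane) along which $f$ is tangent; restricting the ambient geometry to $\mathcal{H}$ and using that the left invariant Gauss map of $S$ restricted to $\alpha_\theta$ records the ``slope'' of $f(\alpha_\theta)$ within $\mathcal{H}$, one gets that $f|_{\alpha_\theta}$ has nonvanishing (or monotone) derivative in a suitable coordinate on $\mathcal{H}$. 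Then a connectedness/degree argument upgrades local injectivity on each $\alpha_\theta$ to global injectivity on $S$, using that $S$ is a sphere and $G$ has degree one. I would also need the endpoint analysis: near each of the two poles of $S$, $f$ should be a graph over a neighborhood of a point on the binding geodesic, which follows from $G$ being a local diffeomorphism there together with the fact that the pages' Gauss images all share the same endpoints on $\S^2$.

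**Expected main obstacle.**

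The hard part will be the per-page embeddedness and the compatibility of the family, i.e., proving that the curves $f(\alpha_\theta)$ are genuinely disjoint for distinct $\theta$ and that $f|_{\alpha_\theta}$ is injective into its page. In $\R^3$ this is handed to us by convexity of the region bounded by $\Sigma$; here there is no convexity, so one must extract the analogous control purely from the algebraic structure of the book and the diffeomorphism hypothesis on $G$. I expect this to require a careful analysis of how the left invariant Gauss map interacts with the specific one-parameter group generating the pages — in particular showing that tangency of $f(S)$ to a page is always ``clean'' (the contact is along an embedded arc, with no higher-order degeneracies) — and then a global argument, likely by contradiction using the Jordan curve theorem on $S$ and on each page, to rule out a self-intersection of $f$. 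Controlling the behavior at the two poles simultaneously with the generic-page analysis, so that the foliation of $S$ by the $\alpha_\theta$ is honest up to and including the poles, is the delicate technical point.
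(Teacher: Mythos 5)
Your plan is built on a premise that fails at the outset: you take the pages of the open book to have left invariant Gauss image equal to an arc (``a half great circle'') in $\S^2$, and you define $\alpha_\theta=G^{-1}(A_\theta)$ as a curve of tangency points. But the pages $L(\theta)$ are halves of two-dimensional \emph{subgroups} $H(\theta)$, and the left invariant Gauss map of any left coset of a two-dimensional subgroup is \emph{constant} (this is exactly Lemma~2.10 quoted in Section~2). So $A_\theta$ is a single point, $G^{-1}(A_\theta)$ is a single point of $S$, and there is no foliation of $S$ by tangency curves $\alpha_\theta$. Moreover, even if one reinterprets $\alpha_\theta$ as the preimage of an actual left coset of a page, your claim that the images ``sit inside the pages'' and are pairwise disjoint for distinct $\theta$ does not hold: tangency of $f$ to a left coset at a point says nothing about the curve lying in that coset, and $f(S)$ genuinely meets left cosets of many different pages, all of which intersect along left cosets of the binding. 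What the constancy of the Gauss map on subgroups actually buys — and what the paper uses — is that for each two-dimensional subgroup $\Sigma$ the height function $\Pi^\Sigma_{\rm left}\circ f$ has exactly two critical points (where $G$ hits the two unit normals of $\Sigma$), hence is Morse with a single max and min, so the level sets $f^{-1}(g\Sigma)$ are single Jordan curves.

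The two ideas that actually carry the proof are absent from your sketch. First, the key counting lemma: every left coset $x\Gamma$ of the \emph{binding} meets $f(S)$ in at most two points. The paper proves this by contradiction via Poincar\'e--Hopf: if some $x\Gamma$ met $f(S)$ transversely in $2n\ge 4$ points, the pullback to $S$ of the tangential part of the rotational field $\partial_\theta$ of the book would be a nonvanishing field on $S$ minus $2n$ points, each of index $+1$, forcing $\chi(S)=2n\ge 4$. This lemma is what makes each slice curve $f(S)\cap P_z$ a ``bigraph'' over an interval in $X/\Gamma$. Second, even granting the bigraph structure, embeddedness of the slice curves does not follow from one open book decomposition (the two graphs forming a bigraph with common endpoints can still cross each other without violating the two-point bound); the paper needs either a second decomposition with transverse binding (available when the matrix $A$ is diagonalizable) or, for ${\rm Nil}_3$ and the non-unimodular group with $D=1$ not isomorphic to $\H^3$, a limiting argument through nearby metric Lie groups that do admit two transverse decompositions. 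Your proposal correctly identifies where the difficulty lies (no convexity, need for clean tangency and a global topological argument), but it supplies neither the index-theoretic counting lemma nor the mechanism that converts the bigraph property into injectivity, so as it stands it does not constitute a proof.
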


We point out that the metric Lie groups which admit an algebraic open
book decomposition were classified in the lecture notes~\cite{mpe11} by the first and third authors;
see Section~\ref{sec:background} in this paper for more details. This class includes
the hyperbolic three-space $\H^3$, the Riemannian
product $\H^2\times \R$, the Riemannian Heisenberg space ${\rm Nil}_3$ and the solvable Lie
group $\sol$ with any of its left invariant metrics.
We note that Theorem~\ref{main} was previously known
only in the particular case $X=\R^3$.

Theorem~\ref{main} has an interesting application to the context of constant
mean curvature surfaces. Indeed, if $\Sigma$ is an immersed oriented sphere of constant
mean curvature in a metric Lie group $X$, and if $\Sigma$ has index one for its
stability operator, it was proved in~\cite{mmpr4} that the left invariant
Gauss map of $\Sigma$ is a diffeomorphism (this property was previously
proved by Daniel and Mira in~\cite{dm2} for the case where $X$ is the
three-dimensional Thurston geometry for $\sol$). Thus, the next corollary
follows immediately from Theorem~\ref{main}:

\begin{corollary}\label{maincor}
Let $X$ be a metric Lie group which admits an algebraic open book
decomposition, and let $\Sigma$ be an immersed constant mean curvature
sphere of index one in $X$. Then $\Sigma$ is embedded.
\end{corollary}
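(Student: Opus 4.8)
The plan is to exploit the fact that an algebraic open book decomposition of $X$ gives a foliation-like structure whose pages are half-planes $H_\theta$ sharing a common binding axis $\Gamma$ (a one-parameter subgroup), with the whole family generated by the action of a one-parameter group of Lie group automorphisms (equivalently, isometries fixing $\Gamma$). The key geometric input is that the left invariant Gauss map is equivariant under left translations but \emph{constant} along the orbits generated by this ambient symmetry in a controlled way; more precisely, each page $H_\theta$, being invariant under a one-parameter group of isometries, has a well-understood left invariant Gauss map image — a great circle (or half of one) in $\S^2=\mathrm{T}_eX$. So the first step is to record precisely, from the classification in \cite{mpe11}, the normal form of $X$ and of its algebraic open book decomposition, and to compute the left invariant Gauss map of each page $H_\theta$, identifying the great circle $C_\theta\subset\S^2$ that is its image and checking that the $C_\theta$ sweep out $\S^2$ as $\theta$ varies, overlapping only along the two poles corresponding to the binding $\Gamma$.

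Next I would use the hypothesis that $G\colon S\to\S^2$ is a diffeomorphism to transfer this page structure to the sphere. For each $\theta$, the preimage $\alpha_\theta:=G^{-1}(C_\theta)$ is an embedded circle in $S$ (since $G$ is a diffeomorphism and $C_\theta$ is an embedded circle), and these circles foliate $S$ minus the two points $G^{-1}(\text{poles})$, exactly mirroring the open book on $X$. The crucial claim to establish is that $f(\alpha_\theta)\subset H_\theta$ for every $\theta$: the condition ``$G(p)\in C_\theta$'' should translate, via the definition of $G$ and the isometry group of the page, into ``the tangent plane of $f$ at $p$ is compatible with lying inside the page $H_\theta$,'' and then a connectedness/uniqueness argument (the page $H_\theta$ through $f(p)$ with the prescribed tangent plane is unique, and $\alpha_\theta$ is connected) forces $f(\alpha_\theta)$ into a single page. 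Granting this, $f$ restricted to each $\alpha_\theta$ is an immersed curve in a half-plane $H_\theta\cong\R^2$ whose (Euclidean-type) Gauss map, read off from $G|_{\alpha_\theta}\colon\alpha_\theta\to C_\theta$, is a diffeomorphism onto the circle $C_\theta$; by the classical two-dimensional fact (a closed immersed plane curve whose unit normal turns monotonically through $2\pi$ bounds a convex disk) each $f(\alpha_\theta)$ is an embedded convex curve in $H_\theta$.

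Then I would assemble embeddedness of $f(S)$ from embeddedness of the slices. Since distinct pages $H_\theta$, $H_{\theta'}$ meet only along the binding axis $\Gamma$, the only way two slices $f(\alpha_\theta)$ and $f(\alpha_{\theta'})$ can intersect is on $\Gamma$; but each convex curve $f(\alpha_\theta)$ meets the line $\Gamma$ in at most two points, and tracking these intersection points as a function of $\theta$ (using that $G$ detects which side of $\Gamma$ one is on) shows the slices fit together without overlap, so $f$ is injective. The two exceptional points $G^{-1}(\text{poles})$ map into $\Gamma$ and are handled as the ``poles'' of the resulting sphere. Finally, the statement about constant mean curvature spheres of index one is immediate: by \cite{mmpr4} (and \cite{dm2} in the $\sol$ case) such a sphere $\Sigma$ has left invariant Gauss map a diffeomorphism onto $\S^2$, so Theorem~\ref{main} applies verbatim and $\Sigma$ is embedded.

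The main obstacle I expect is the step ``$f(\alpha_\theta)\subset H_\theta$'': a priori the Gauss map condition only constrains the tangent plane pointwise, and one must rule out that the image curve wanders between different pages while keeping its tangent planes ``parallel'' to the page family. This requires understanding precisely how the left invariant Gauss map interacts with the algebraic structure — specifically, that the automorphism group generating the open book acts on $\S^2=\mathrm{T}_eX$ in a way that makes the great circles $C_\theta$ its orbits (or unions of orbits), so that membership $G(p)\in C_\theta$ is equivalent to an integrability condition that pins $f(p)$ to $H_\theta$ once one point of $\alpha_\theta$ is known to lie there. Establishing this equivalence — essentially a computation in each of the finitely many model metric Lie groups from \cite{mpe11}, organized so as to avoid a case-by-case proof — is the technical heart of the argument.
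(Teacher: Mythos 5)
Your final step---quoting \cite{mmpr4} to conclude that an index-one constant mean curvature sphere has a left invariant Gauss map that is a diffeomorphism, and then invoking Theorem~\ref{main}---is exactly the paper's proof of this corollary, and it is all that is needed. Everything preceding it in your proposal is an attempt to re-prove Theorem~\ref{main} itself by slicing $S$ along the preimages under $G$ of great circles $C_\theta$ attached to the pages, and that attempt contains a genuine gap.

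The gap is the one you flagged, and it is not a technical obstacle to be overcome by computation in the model groups: the step is false. The left invariant Gauss map of a two-dimensional subgroup $H(\theta)$, or of any of its left cosets, is \emph{constant}, not a great circle (this is the lemma from \cite[Lemma 3.9]{mpe11} recalled in Subsection~\ref{sec:gauss}). The natural great circle attached to $\theta$ is therefore the circle $C_\theta$ of unit vectors in $T_eH(\theta)$, and the condition $G(p)\in C_\theta$ says only that the unit normal of $f$ at $p$ is the left translate of a vector tangent to $H(\theta)$; it does not place $f(p)$ in any left coset of $H(\theta)$, let alone in the page itself. Already in $X=\R^3$ with the pencil of planes through the $x$-axis, $G^{-1}(C_\theta)$ is the shadow boundary of the convex body bounded by $f(S)$ in the direction normal to $H(\theta)$, which for a generic convex body is a non-planar space curve, so $f\bigl(G^{-1}(C_\theta)\bigr)\subset H(\theta)$ fails even in the classical case. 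The paper's actual route to Theorem~\ref{main} is different: it slices $f(S)$ by the left cosets of a two-dimensional subgroup via a Morse-theoretic transversality argument (Lemma~\ref{lemma3.2}), bounds the number of preimages of each left translate of the binding $\Gamma$ by two using a Poincar\'e--Hopf index count (Lemma~\ref{ass3.10}), shows each slice curve is a bigraph over an interval in $X/\Gamma$ (Theorem~\ref{thm3.4}), and obtains embeddedness first when $A$ is diagonal, using two orthogonal open books, and then in general by a limiting argument. For the corollary as stated, simply delete the first part of your write-up and keep the citation of \cite{mmpr4} together with Theorem~\ref{main}.
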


Corollary~\ref{maincor} was previously known in the following cases:
 \begin{enumerate}
 \item
If $X$ has constant curvature, then all constant mean curvature spheres
in $X$ are totally umbilical round spheres (by Hopf's classical theorem);
in particular they are embedded and have index one.
 \item
If $X$ has isometry group of dimension $4$, Abresch and Rosenberg~\cite{abro1,abro2} proved
that constant mean curvature
spheres in $X$ are rotational spheres. If, additionally, $X$ is
diffeomorphic to $\R^3$, then all such spheres have
index one~\cite{so3,tou1} and are embedded. We note that all homogeneous manifolds
diffeomorphic to $\R^3$ that have a $4$-dimensional isometry group
also admit a metric Lie group structure with an algebraic open book
decomposition (see Remark~\ref{openek}).
 \item
 If $X$ is the Lie group $\sol$ endowed with its \emph{standard} maximally
 symmetric left invariant metric, then the statement in Corollary~\ref{maincor}
 was proved by Daniel and Mira in~\cite{dm2}. We note that our proof here
 is completely different from the approach in~\cite{dm2}. As a matter of
 fact, the proof in~\cite{dm2} uses that the {standard} left invariant
 metric in $\sol$ admits planes of reflectional symmetry, a property that
 is not true for arbitrary left invariant metrics on the Lie group $\sol$.
 \end{enumerate}

\begin{remark}
\emph{In view of Theorem 4.1 in~\cite{mmpr4}, Corollary~\ref{maincor}
represents an advance towards proving the following conjecture:
\emph{any constant mean curvature sphere in a homogeneous manifold diffeomorphic to $\R^3$
is embedded}}.
\end{remark}

In Section~\ref{sec:background} we review several aspects of the geometry
of metric Lie groups that we will need. In particular, we will explain the
classification of metric Lie groups that admit an algebraic open book
decomposition. Section~\ref{sec:background} can be seen as introductory
material; for a more complete introduction to the geometry of surfaces in metric Lie
groups, we refer to~\cite{mpe11} and \cite{mil2}.

In Section~\ref{sec:openbooks} we prove a more detailed version of
Theorem~\ref{main}, which gives relevant information (beyond embeddedness)
about the geometry of spheres whose left invariant  Gauss maps are
diffeomorphisms (see Theorem~\ref{thm3.4}).

\section{Metric Lie groups and algebraic open book decompositions.}
 \label{sec:background}

We next consider metric Lie groups that admit an \emph{algebraic
open book decomposition}. This notion is an extension to metric Lie groups
of the usual notion of a pencil of half-planes in $\R^3$.

\begin{definition} \label{def:al-book}
{\rm Let $X$ be a metric Lie group and $\G \subset X $ a 1-parameter
subgroup. An {\it algebraic open book decomposition of $X$ with
binding $\G$} is a foliation $\mathcal{B}=\{ L(\theta )\} _{\t \in
[0,2\pi )}$ of $X-\G $ such that the sets
\[
H(\t )=L(\t )\cup \G \cup L(\pi +\t )
\]
are two-dimensional subgroups of $X$, for all $\t \in [0,\pi )$. We
will call $L(\t )$ the {\it leaves}  and $H(\t )$ the {\it
subgroups} of the algebraic open book decomposition $\mathcal{B}$.}
\end{definition}

Observe that this definition only depends on the Lie group
structure of $X$, and not on its left invariant metric.

The metric Lie groups that admit an algebraic open book decomposition
were classified by the first and third authors in~\cite{mpe11}. All of
them are \emph{semidirect products}. We will next review some properties
of metric semidirect products in order to present this classification.

\subsection{Semidirect products.}
Consider a \emph{semidirect product} which is a Lie group
$(\R^3\equiv \R^2\times \R,*)$, where the group operation $*$ is
expressed in terms of some real $2\times2$ matrix $A\in \cM_2(\R)$ as
\begin{equation}
\label{eq:5}
 ({\bf p}_1,z_1)*({\bf p}_2,z_2)=({\bf p}_1+ e^{z_1 A}\  {\bf
 p}_2,z_1+z_2);
\end{equation}
here $e^B=\sum _{k=0}^{\infty }\frac{1}{k!}B^k$ denotes the usual exponentiation
of a matrix $B\in \cM_2(\R )$. We will use the
notation $\R^2\rtimes_A \R$ to denote such a Lie group.

Suppose $X$ is isomorphic to $\R^2\rtimes_A
\R$, where
\begin{equation} \label{equationgenA} A=\left(
\begin{array}{cr}
a & b \\
c & d \end{array}\right) .
\end{equation}
Then, in terms of the coordinates $(x,y)\in \R^2$, $z\in \R $, we
have the following basis $\{ F_1,F_2,F_3\} $ of the space of {\it right invariant}
vector fields on $X$:
\begin{equation}
\label{eq:6}
 F_1=\partial _x,\quad F_2=\partial _y,\quad F_3(x,y,z)=
(ax+by)\partial _x+(cx+dy)\partial _y+\partial _z.
\end{equation}
In the same way, a {\it left invariant}  frame $\{ E_1,E_2,E_3\} $ of $X$
is given by
\begin{equation}
\label{eq:6*}
 E_1(x,y,z)=a_{11}(z)\partial _x+a_{21}(z)\partial _y,\quad
E_2(x,y,z)=a_{12}(z)\partial _x+a_{22}(z)\partial _y,\quad
 E_3=\partial _z,
\end{equation}
where
\begin{equation}
\label{eq:exp(zA)}
 e^{zA}=\left(
\begin{array}{cr}
a_{11}(z) & a_{12}(z) \\
a_{21}(z) & a_{22}(z)
\end{array}\right) .
\end{equation}
In terms of $A$, the Lie bracket relations are:
\begin{equation}
\label{eq:8a} [E_1,E_2]=0,
\quad
[E_3,E_1]=aE_1+cE_2,
\quad
 [E_3,E_2]=bE_1+dE_2.
 \end{equation}
Observe that ${\rm Span}\{ E_1,E_2\}$ is an integrable
two-dimensional distribution of $X$, whose integral surfaces define the
foliation $\mathcal{F}= \{ \R^2\rtimes _A\{ z\} \mid z\in \R \} $ of
$\R^2\rtimes _A\R$. All the leaves of $\mathcal{F}$ are complete and intrinsically flat
with respect to the metric given in the following definition.

\begin{definition}
 \label{def2.1}
 {\rm
We define the {\it canonical left invariant metric} on the
semidirect product $\R^2\rtimes _A\R $ to be that one for which the
left invariant frame $\{ E_1,E_2,E_3\} $ given by (\ref{eq:6*}) is
orthonormal. Equivalently, it is the left invariant extension to $X=\R^2\rtimes _A\R $
of the inner product on the tangent space $T_eX$ at the identity element $e=(0,0,0)$
that makes $(\partial _x)_e,(\partial _y)_e,(\partial _z)_e$ an orthonormal basis.}
\end{definition}

\subsection{Unimodular semidirect products.}
\label{secunimodgr}
A semidirect product $\R^2 \rtimes_A\R$ is unimodular if and only
if ${\rm trace}(A)=0$. We next provide four examples of unimodular
metric semidirect products. It was proved in~\cite{mpe11} that if $X$
is a unimodular metric Lie group that is not isomorphic to $\su$ or to
$\sl$, then $X$ is isomorphic and (up to rescaling) isometric to one of these four examples.

\begin{example}
\label{example1}
\emph{{\bf The abelian group $\R^3$}. If the matrix $A$ is zero, the metric
Lie group $X=\R^2\rtimes_A \R$ is the abelian group $\R^3$ endowed with
a left invariant metric. All such left invariant metrics are isometric
to the usual Euclidean metric of $\R^3$.}
\end{example}

\begin{example}
\label{example2}
\emph{{\bf The Heisenberg group}.
If $A=\left(\begin{array}{ll} 0 & 1 \\ 0 & 0\end{array}\right)$,
then $X=\R^2\rtimes_A\R$ gives a metric Lie group isomorphic
to the Heisenberg group ${\rm Nil}_3$. Up to
rescaling, ${\rm Nil}_3$ with an arbitrary left invariant metric
is  isometric to $X$. We note that all left
invariant metrics on ${\rm Nil}_3$ have a four-dimensional
isometry group.}
\end{example}

\begin{example}
\label{example3}
\emph{{\bf The group $\sol$}.
If $A=\left(\begin{array}{cl} 0 & c \\ 1/c & 0\end{array}\right)$,
$c\geq 1$, then $X_c=\R^2\rtimes_A\R$ gives a metric Lie group isomorphic
to the Lie group  ${\rm Sol}_3$. Specifically, for each $c\geq 1$ the
metric Lie group $X_c$ (with its canonical metric) is isomorphic
 to $\sol$. Conversely,
up to rescaling, all left invariant metrics in ${\rm Sol}_3$ are recovered by
this model. We note that $c=1$ corresponds to the Thurston geometry model
of $\sol$, i.e., to the left invariant metric in $\sol$ with a maximal isometry
group. Every left invariant metric of $\sol$ has an isometry group of dimension three.}
\end{example}

\begin{example}
\emph{{\bf The group $\EE$}.
Suppose $A=\left(\begin{array}{cr} 0 & -c \\ 1/c & 0\end{array}\right)$,
$c\geq 1$. Then $X_c=\R^2\rtimes_A\R$ gives a metric Lie group isomorphic
to $\EE$, the Lie group defined as the universal cover of the Euclidean
group of orientation-preserving rigid motions of the plane. These
metric Lie groups do not admit algebraic open book decompositions, so we will
not be more specific about them here.}
\end{example}

\subsection{Non-unimodular groups.}
The cases $X=\R^2\rtimes_A \R$ with ${\rm trace} (A)\neq 0$
correspond to the simply connected, three-dimensional, {\em non-unimodular}
metric Lie groups. In these cases,
up to the rescaling of the metric of $X$, we may assume that
${\rm trace} (A)=2$. {\it This normalization in the non-unimodular case
will be assumed from now on throughout the paper}.
After an orthogonal change of the left invariant frame (see Section~2.5 in~\cite{mpe11} for details),
we may express the matrix $A$ uniquely as
\begin{equation}
\label{Axieta} A=A(a,b)= \left(
\begin{array}{cc}
1+a & -(1-a)b\\
(1+a)b & 1-a \end{array}\right), \hspace{1cm} a,b\in [0,\infty).
\end{equation}

The \emph{canonical basis} of the
non-unimodular metric Lie group $X$ is, by definition, the
left invariant orthonormal frame $\{E_1,E_2,E_3\}$ given in
\eqref{eq:6*} by the matrix $A$ in~(\ref{Axieta}).
In other words, every non-unimodular metric Lie
group is isomorphic and isometric (up to possibly rescaling the
metric) to $\R^2\rtimes _A\R $ with its canonical metric, where $A$
is given by (\ref{Axieta}). If  $A=I_2$ where $I_2$ is the identity
matrix, then  we get a metric Lie group that we denote by $\H^3$, which is
isometric to the  hyperbolic three-space with its standard
constant curvature $-1$ metric and  where the underlying Lie group structure is
isomorphic to that of
the set of similarities of $\R^2$.
Under the assumption that $A\neq I_2$, the
determinant of $A$ determines uniquely the Lie group structure. This number
is the \emph{Milnor $D$-invariant} of $X=\R^2\rtimes_A \R$:
\begin{equation}
\label{Dxieta} D=(1-a^2)(1+b^2) ={\rm det} (A).
\end{equation}

Assuming $A\neq I_2$, given $D\in \R $, one can solve
(\ref{Dxieta}) for $a=a(D,b)$, producing a related matrix $A(D,b)$
by equation (\ref{Axieta}), and the space of canonical left
invariant metrics on the corresponding non-unimodular Lie group
structure is parameterized by the values of $b\in [m(D),\infty )$,
where
\begin{equation}
\label{eq:m(D)}
m(D)=\left\{ \begin{array}{cl}
\sqrt{D-1} & \mbox{ if $D>1$,}\\
0 & \mbox{ otherwise}.
\end{array}\right.
\end{equation}
In particular, the space of simply connected, three-dimensional,
non-unimodular metric Lie groups with a given $D$-invariant
is two-dimensional (one-dimensional after identification by rescaling of the metric).
See Figure~\ref{nonunimod} for a representation of these metric Lie groups in terms of
$(D,b)$.

\begin{figure}
\begin{center}
\includegraphics[height=5.3cm]{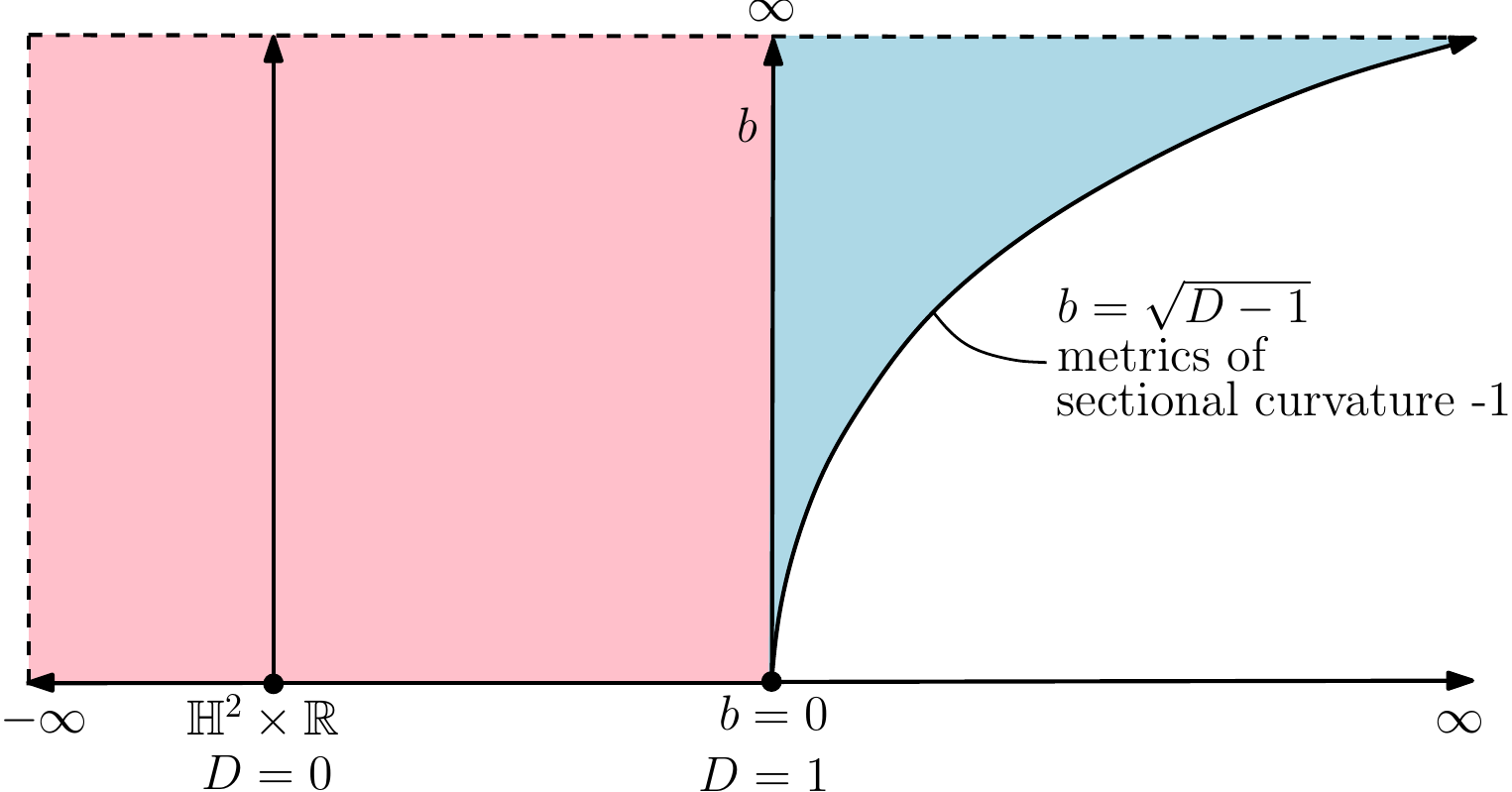}
\caption{Representation of the moduli space of
simply connected, three-dimensional, non-unimodular
metric Lie groups in terms of points in the $(D,b)$-plane, so that the group is
$\R^2\rtimes _{A(D,b)}\R $ where $A(D,b)\in \mathcal{M}_2(\R )$
is given by (\ref{Axieta}) after solving
(\ref{Dxieta}) for $a=a(D,b)$. Points in the same vertical halfline correspond to metric Lie groups
with the same Lie group structure.
The point $(D,b)=(0,0)$
corresponds to the product homogeneous manifold $\Hip ^2 (-4)\times
\R $ where $\Hip ^2 (-4)$ has constant negative curvature $-4$.
} \label{nonunimod}
\end{center}
\end{figure}

\subsection{Classification of algebraic open book decompositions}
\label{subsec:openbook}

The following result was proved by the first
and third authors, see Theorem~3.6 in~\cite{mpe11}.

\begin{proposition}\label{clasopenbook}
Let $X$ be a metric Lie group. The following properties are equivalent:
\begin{enumerate}
\item
$X$ admits an algebraic open book decomposition.
 \item
$X$ is isomorphic to one of the following Lie groups:
 $\R^3$, ${\rm Nil}_3$, ${\rm Sol}_3$ or a non-unimodular
 Lie group with $D$-invariant $D\leq 1$.
\end{enumerate}
\end{proposition}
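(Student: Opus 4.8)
The plan is to prove Proposition~\ref{clasopenbook} by analyzing, in terms of the semidirect-product normal forms recalled in Section~\ref{sec:background}, exactly when a $1$-parameter subgroup $\G$ can serve as the binding of an algebraic open book decomposition. First I would observe that the condition in Definition~\ref{def:al-book} forces $X$ to possess a one-parameter \emph{family} of two-dimensional subgroups $\{H(\t)\}_{\t\in[0,\pi)}$ all sharing the common $1$-parameter subgroup $\G$ and foliating $X-\G$; in particular $X$ must have at least a $\pi$-worth of distinct two-dimensional subgroups through $\G$, and their union (minus $\G$) must be all of $X$. This is a strong constraint, so the natural strategy is: (i) show that the existence of such a family forces $X$ to be a semidirect product $\R^2\rtimes_A\R$ (not $\su$ or $\sl$ when unimodular), and (ii) within the semidirect-product models, determine which matrices $A$ admit the required pencil of two-dimensional subgroups through some common axis.

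For step (ii), the key algebraic fact is that a two-dimensional subgroup of $\R^2\rtimes_A\R$ is determined by its Lie subalgebra, a two-dimensional subalgebra $\mathfrak{h}\subset\algG=T_eX$. Using the bracket relations~\eqref{eq:8a}, I would classify the two-dimensional subalgebras: either $\mathfrak{h}=\mathrm{Span}\{E_1,E_2\}$ (the normal abelian subalgebra, giving the fixed leaf $\R^2\rtimes_A\{0\}$ of $\mathcal F$), or $\mathfrak{h}$ contains a vector with nonzero $E_3$-component, say $E_3+w$ with $w\in\mathrm{Span}\{E_1,E_2\}$, together with one vector $v\in\mathrm{Span}\{E_1,E_2\}$; the subalgebra condition $[E_3+w,v]\in\mathfrak{h}$ then becomes the requirement that $v$ be an eigenvector (or generalized eigenvector, in the nilpotent case) of $A$ — more precisely of $A^t$ acting appropriately. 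Thus the $1$-parameter subgroups that can appear as binding $\G$ are exactly those whose tangent line at $e$ is spanned by some $E_3+w$, and the two-dimensional subgroups through such a $\G$ correspond to the choices of invariant line $\mathrm{Span}\{v\}$ of $A$. The open-book condition demands a \emph{continuum} of such subgroups $H(\t)$ whose leaves foliate $X-\G$; since each $H(\t)$ is determined by the pair $(\G, \text{line }\mathrm{Span}\{v\})$ and the lines $\mathrm{Span}\{v\}$ available are constrained by the eigenstructure of $A$, I would show this continuum exists precisely when $A$ has a real eigenvalue with the right properties — which, running through the normalized forms~\eqref{Axieta} and Examples~\ref{example1}--\ref{example3}, pins down exactly $\R^3$, ${\rm Nil}_3$, ${\rm Sol}_3$, and the non-unimodular groups with $D=\det(A)\le 1$, while excluding $\EE$ (complex eigenvalues) and the non-unimodular groups with $D>1$ (where the relevant invariant subspace structure fails) as well as $\su$ and $\sl$.

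Concretely, for the forward implication I expect the argument to go: given an algebraic open book decomposition with binding $\G$, each subgroup $H(\t)$ is tangent at $e$ to a two-dimensional subalgebra $\mathfrak h(\t)$ containing the line $\ell=T_e\G$; writing $\ell=\mathrm{Span}\{E_3+w_0\}$ after a left-invariant frame change, the condition that $\mathfrak h(\t)$ be a subalgebra forces the complementary line in $\mathfrak h(\t)\cap\mathrm{Span}\{E_1,E_2\}$ to be $A$-invariant, and the condition that $\{L(\t)\}$ foliates $X-\G$ forces these complementary lines to sweep out all of $\mathrm{Span}\{E_1,E_2\}$ as $\t$ varies — which is possible only if $A$ (or the relevant operator built from it) is, up to conjugation and scaling, one of: the zero matrix, a nonzero nilpotent matrix, or a matrix with two real eigenvalues of opposite sign or one zero eigenvalue; translating these into the invariants recovers the list in (2). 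For the converse, for each Lie group in the list I would exhibit an explicit algebraic open book decomposition: for $\R^3$ the classical pencil of half-planes through a line; for the other cases, take $\G$ to be the subgroup generated by $E_3$ (or a suitable $E_3+w$) and let $H(\t)$ be the subgroups $\mathrm{Span}\{\cos\t\, E_1+\sin\t\, E_2\}\oplus\mathrm{Span}\{E_3\}$-type subalgebras, checking via~\eqref{eq:8a} that these are genuinely subalgebras exactly in the listed cases and that the corresponding subgroups foliate $X-\G$ with the open-book structure.

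The main obstacle I anticipate is the bookkeeping in step (ii): correctly identifying which lines in $\mathrm{Span}\{E_1,E_2\}$ extend to two-dimensional subalgebras together with a \emph{fixed} transversal direction, and then verifying that the resulting one-parameter family genuinely foliates $X-\G$ rather than overlapping or missing a region — this is where the distinction between $D\le 1$ and $D>1$ in the non-unimodular case, and between $\sol$ and $\EE$ in the unimodular case, actually appears, and it requires careful attention to the eigenvalue signs and to whether $\G$ itself lies in each $H(\t)$ as required. Since the full classification of subalgebras of all three-dimensional metric Lie groups and the detailed model computations are carried out in~\cite{mpe11}, I would lean on those normal forms and limit the new work here to the subalgebra-through-a-common-axis analysis sketched above.
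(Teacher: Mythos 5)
First, a point of reference: the paper does not actually prove Proposition~\ref{clasopenbook}; it is quoted from Theorem~3.6 of~\cite{mpe11}, so there is no in-paper argument to compare against. Your overall strategy --- reduce to semidirect products $\R^2\rtimes_A\R$, classify the two-dimensional subalgebras via the brackets~\eqref{eq:8a}, and decide when a full pencil of such subalgebras through a common line exists --- is the right one, and it is essentially the route taken in the cited reference.

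However, the concrete geometry in your sketch is backwards, and as written the argument would fail for precisely the interesting groups on the list. You place the binding transversally to the normal subgroup, writing $\ell=T_e\G=\mathrm{Span}\{E_3+w_0\}$, and you generate the pencil $\{\mathfrak{h}(\t)\}$ by letting the lines $\mathfrak{h}(\t)\cap\mathrm{Span}\{E_1,E_2\}$ vary. By your own computation each such line must be $A$-invariant, and for the planes $\mathfrak{h}(\t)$ to exhaust all planes through $\ell$ (equivalently, for the leaves to foliate $X-\G$) these lines must sweep out \emph{every} line of $\mathrm{Span}\{E_1,E_2\}$; this forces $A=\lambda I_2$, which yields only $\R^3$ and $\H^3$ and excludes ${\rm Nil}_3$ (one eigenline), ${\rm Sol}_3$ (two eigenlines) and the non-unimodular groups with $D<1$. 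Correspondingly, your proposed model $H(\t)=\mathrm{Span}\{\cos\t\,E_1+\sin\t\,E_2\}\oplus\mathrm{Span}\{E_3\}$ is a subalgebra only for the at most two values of $\t$ (mod $\pi$) at which $\cos\t\,E_1+\sin\t\,E_2$ is an eigenvector of $A$, so it is not an open book decomposition of ${\rm Nil}_3$ or ${\rm Sol}_3$. The correct picture --- the one implicit in the paper's normalization in Subsection~\ref{subsec:openbook}, where the binding is $\{(x,0,0)\}$ and $A$ is taken with $a_{21}=0$ --- is the opposite: the binding must be tangent to a single real eigenline $\mathrm{Span}\{v\}\subset\mathrm{Span}\{E_1,E_2\}$ of $A$, and the pencil is $\mathfrak{h}(\t)=\mathrm{Span}\{v,\ \cos\t\,u+\sin\t\,E_3\}$ with $u$ complementary to $v$ in $\mathrm{Span}\{E_1,E_2\}$; every member is a subalgebra because $\mathrm{Span}\{E_1,E_2\}$ is abelian and $[E_3,v]\in\mathrm{Span}\{v\}$, and this realizes the whole pencil of planes through $\ell$. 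With that correction the criterion becomes ``$A$ has a real eigenvector,'' which via \eqref{Axieta}--\eqref{Dxieta} (eigenvalues $1\pm\sqrt{1-D}$) gives exactly $D\le 1$ in the non-unimodular case and excludes $\EE$ in the unimodular case; you still need the separate observations that $\su$ has no two-dimensional subgroups at all and that the two-dimensional subalgebras of $\sl$ share no common line, so neither admits an algebraic open book decomposition.
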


As a matter of fact, the results in Section~3 of~\cite{mpe11} also
classify the algebraic open book decompositions and prove that when $X$
admits such an algebraic open book decomposition, then every two-dimensional
subgroup of $X$ is a subgroup of some open book decomposition of
$X$.

It follows from this classification that if
$X$ admits an algebraic open book decomposition $\mathcal{B}$, then
$X$ is isomorphic to $\R^2\rtimes_A \R$ for some $A\in
\mathcal{M}_2(\R)$, and we may assume that the binding $\G$ associated to $\mathcal{B}$
is the 1-parameter
subgroup\footnote{Note that exchanging  the matrix $A$ in
Example~3.5 in~\cite{mpe11} by its transpose, we do not change the
Lie group structure and the binding $\G = \{ (0,y,0)\ | \ y\in \R \} $ in that
example changes to $\G '=\{ (x,0,0)\ | \ x\in \R \} $.}
 $\{(x,0,0) \mid x\in \R\}$ and the $(x,y)$-plane $P_0=\R^2 \rtimes _A \{0\}$
is one of the subgroups of $\mathcal{B}$.

\begin{remark}\label{openek}
\emph{Let $Y$ be a homogeneous manifold diffeomorphic to $\R^3$,
and assume that the isometry group of $Y$ is of dimension greater
than three. Then, 
$Y$ is isometric to a metric Lie group $X$ which is
(up to rescaling) to one of the following spaces: $\R^3$, $\H^3$,
$\H^2\times \R$, ${\rm Nil}_3$, or to $\sl$ endowed with a rotationally
symmetric left invariant metric. The spaces $\H^3$ and $\H^2\times \R$
can be seen as non-unimodular metric Lie groups with $D$-invariant
$D=1$ and $D=0$, respectively. In addition, the underlying Riemannian manifold
associated to $\sl$ endowed with a rotationally symmetric left invariant
metric also admits a different metric Lie group structure,
isomorphic to the one of $\H^2\times \R$. Therefore, in all cases,
$X$ admits algebraic open book decompositions by Proposition~\ref{clasopenbook}. }
\end{remark}

\subsection{The left invariant Gauss map of surfaces in
metric Lie groups.}
\label{sec:gauss}

Let $f\colon  \Sigma\la X$ be an immersed oriented surface in a
metric Lie group $X$, and let $G\colon \Sigma\flecha \S^2$ denote its
left invariant Gauss map (Definition~\ref{defG}). Note that the
definition of $G$ depends on the left invariant metric chosen on
$X$; however, we next  explain that the property of such a Gauss
map being a diffeomorphism is independent of the left invariant metric,
i.e., it only depends on the Lie group structure of $X$.

To be more precise, suppose that $f\colon  \Sigma \la X$ is an oriented smooth surface,
and let $G(2,T_eX)$ be the set of oriented two-dimensional planes in
$T_eX$ passing through the origin of $T_eX$. Then, we can consider
two naturally defined Gauss maps
\begin{equation}
\label{eq:Gleft}
G_{\mbox{\small left}}\colon \Sigma \to G(2,T_eX),\quad
G_{\mbox{\small right}}\colon \Sigma \to G(2,T_eX),
\end{equation}
defined by left (resp. right) translating the respective oriented tangent spaces
of $\Sigma$ to $G(2,T_eX)$. Note that these Gauss maps are defined
without any reference to a left invariant metric on $X$. Now, once
we choose a left invariant metric, we can define $\S^2
\subset T_eX$ to be the set of tangent unit vectors to $X$ at $e$,
which is naturally diffeomorphic to $G(2,T_eX)$ just by taking for
each vector $v\in \S^2$ the oriented two-plane orthogonal to $v$
(using the orientation of $T_eX$ and the chosen left invariant metric).
This shows that the left
invariant Gauss map $G\colon  \Sigma \to \S^2$ introduced in Definition~\ref{defG}
with respect to this metric is a diffeomorphism 
if and only if $G_{{\rm left}}$ has the same
property, and therefore proves our claim.  

The left invariant Gauss map is useful to detect two-dimensional subgroups of metric Lie groups:

\begin{lemma}(\cite[Lemma 3.9]{mpe11})
Let $f\colon  \Sigma\la X$ be an immersed oriented surface in a
metric Lie group $X$, and let $G\colon \Sigma\flecha \S^2$ denote its
left invariant Gauss map. Then $G$ is constant if and only if $f(\Sigma)$
is contained in a left coset of a two-dimensional subgroup of $X$.
\end{lemma}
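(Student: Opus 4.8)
The statement to prove is the characterization: $G$ is constant if and only if $f(\Sigma)$ lies in a left coset of a two-dimensional subgroup. I would prove the two implications separately, working in the model $X = \R^2\rtimes_A\R$ with the canonical left invariant metric (by the discussion above, constancy of $G$ is metric-independent, so this loses no generality).

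First the easy direction. Suppose $f(\Sigma)$ is contained in a left coset $x_0 H$ of a two-dimensional subgroup $H \subset X$. Then $l_{x_0^{-1}}\circ f$ parametrizes a piece of $H$, and since $H$ is a subgroup, $T_{e}H$ left-translates (within $H$) to all the tangent spaces of $H$. Concretely, writing $V = T_eH \subset T_eX$ for the corresponding two-plane, the tangent space to $x_0 H$ at any point $x_0 h$ is $(dl_{x_0 h})_e(V)$; hence the left-translated tangent plane $G_{\mathrm{left}}$ of $f$ is the constant plane $V$, and after choosing the unit normal and using the identification $G(2,T_eX)\cong\S^2$ fixed above, $G$ is the constant unit vector orthogonal to $V$. (One must fix the orientation/sign of $N$ consistently, but that is routine.) So this direction is immediate from the definitions.

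For the converse, assume $G\equiv v_0$ is constant, equivalently $G_{\mathrm{left}}\equiv V$ for the fixed two-plane $V = v_0^\perp \subset T_eX$. This says: for every $p\in\Sigma$, the tangent plane $T_{f(p)}f(\Sigma)$ equals $(dl_{f(p)})_e(V)$. Consider the left invariant distribution $\mathcal V$ on $X$ defined by $\mathcal V_x = (dl_x)_e(V)$. The hypothesis says $f(\Sigma)$ is an integral surface of $\mathcal V$. The plan now is (i) observe that a left invariant distribution on a Lie group is integrable if and only if $V$ is a Lie subalgebra of $\algG = T_eX$, and in that case its integral leaf through $e$ is the (two-dimensional, closed since $X$ is simply connected and these subgroups are the $H(\theta)$'s of open book type — but closedness is not even needed) connected subgroup $H$ with Lie algebra $V$, and the leaves of the foliation integrating $\mathcal V$ are exactly the left cosets $xH$; (ii) since $f(\Sigma)$ is connected and is an integral surface, it is contained in a single leaf, i.e.\ in one left coset $xH$. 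Step (i) is the standard Frobenius/Lie-subgroup correspondence. The one genuine point to check is that $V$ \emph{is} a subalgebra: this follows because $\mathcal V$ is integrable (it has a two-dimensional integral manifold through the point $f(p_0)$, namely $f(\Sigma)$ locally), and integrability of a left invariant plane field forces $[V,V]\subset V$ — indeed if $X,Y$ are left invariant vector fields spanning $\mathcal V$ then $[X,Y]$ is left invariant and, by Frobenius applied to the local integral manifold, tangent to it, hence lies in $\mathcal V$, i.e.\ $[X,Y]_e \in V$.

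\textbf{Main obstacle.} There is no deep obstacle; the content is bookkeeping about left invariant distributions versus Lie subgroups. The step requiring the most care is making the identification "$G$ constant $\iff$ $f(\Sigma)$ is an integral leaf of a left invariant plane field" precise — in particular handling orientations so that the \emph{unit normal} picture (Definition~\ref{defG}) matches the \emph{tangent plane} picture $G_{\mathrm{left}}$, and noting that the argument deducing "$V$ is a subalgebra" only uses one point $p_0\in\Sigma$ where $f$ is an immersion (which holds everywhere), so no global regularity of $f$ is needed. Once $V$ is known to be a subalgebra, invoking the correspondence between connected Lie subgroups and subalgebras (valid here since $X$ is simply connected) and the fact that the cosets of $H$ foliate $X$ with $\mathcal V$ as tangent distribution finishes the proof: a connected integral surface of $\mathcal V$ sits in one coset.
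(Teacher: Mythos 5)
Your proof is correct. Note that the paper does not actually prove this lemma: it is imported verbatim from \cite[Lemma~3.9]{mpe11}, so there is no internal proof to compare against. Your argument --- the forward direction by left-translating the tangent planes of a coset $x_0H$ back to $T_eH$, and the converse by observing that a connected immersed surface everywhere tangent to the left invariant distribution $\mathcal{V}_x=(dl_x)_e(V)$ forces $[V,V]\subset V$ via Frobenius, so that $V$ is a subalgebra whose connected subgroup $H$ has left cosets as the integral leaves of $\mathcal{V}$, one of which must contain the connected set $f(\Sigma)$ --- is the standard proof and is essentially the one given in the cited reference.
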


\section{Algebraic open book decompositions and
embeddedness of spheres.}
\label{sec:openbooks}

The main objective of this section is to prove Theorem~\ref{thm3.4},
which in particular implies the embeddedness result stated in
Theorem~\ref{main}. As a first step, we will
prove a \emph{transversality lemma}. For this purpose, we note that
a (simply connected) metric Lie group $X$ admits some
two-dimensional subgroup $\Sigma$ if and only if $X$ is not
isomorphic to ${\rm SU(2)}$, or equivalently, if and only if $X$
is  diffeomorphic to $\R^3$ (see Theorem~3.6 in~\cite{mpe11}).

\begin{lemma}[Transversality Lemma]
\label{lemma3.2}
Let $f\colon  S\looparrowright X$ be an immersed oriented sphere in $X$
whose left invariant Gauss map $G$ is a
diffeomorphism. Suppose that $X$ admits a two-dimensional subgroup $\Sigma $.
Then:
\begin{enumerate}[(1)]
\item The quotient space $X/\Sigma =\{ g\Sigma \, \mid \, g\in X\} $ consisting of the
left cosets of $\Sigma $ is diffeomorphic to $\R$.
\item The set of left cosets of $\Sigma $ which intersect
$f(S)$ can be parameterized by the interval $[0,1]$, i.e., $\{
g(t)\Sigma  \, \mid \, t\in [0,1] \} $ are these cosets.
\item Each of the left cosets $g(0)\Sigma $
and $g(1)\Sigma $ intersects $f(S)$ at a
single point.
\item  For every $t\in (0,1)$, $g(t)\Sigma $ intersects
$f(S)$ transversely in a connected, immersed closed curve, and the
preimage under $f$ of this curve is a simple closed curve in $S$.
\item Consider the smooth map
\[
\Pi^{\Sigma}_{\rm left}\colon  X \to X/\Sigma \cong\R, \quad \Pi^{\Sigma}_{\rm left}(g)=g\Sigma .
\]
 Then, $\Pi^{\Sigma}_{\rm left} \circ f$ is a Morse function with exactly two
critical points, where one critical point has index 0 and the other one  has index 2.
\end{enumerate}
\end{lemma}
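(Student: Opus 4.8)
The plan is to analyze the composition $h := \Pi^{\Sigma}_{\rm left}\circ f\colon S\to\R$ by relating its critical points to points where the tangent plane of $f$ is parallel (as a left-translated plane) to the subgroup $\Sigma$, and then to use the hypothesis that the left invariant Gauss map $G$ is a diffeomorphism to count these points exactly.

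First I would set up item (1). Since $X$ is diffeomorphic to $\R^3$ and $\Sigma$ is a two-dimensional subgroup, $\Sigma$ is a closed, connected, simply connected submanifold, hence diffeomorphic to $\R^2$; the left cosets $g\Sigma$ foliate $X$ (they are the leaves of a left-invariant foliation, so all diffeomorphic to $\R^2$), and the quotient $X/\Sigma$ is a connected one-manifold, hence diffeomorphic to $\R$ (it cannot be $\S^1$ since $X\cong\R^3$ is simply connected and would fiber over $\S^1$). This makes $\Pi^{\Sigma}_{\rm left}\colon X\to X/\Sigma\cong\R$ a submersion whose fibers are the cosets.

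Next, the heart of the argument: identify the critical points of $h=\Pi^{\Sigma}_{\rm left}\circ f$. A point $p\in S$ is critical for $h$ exactly when $df_p(T_pS)$ is tangent to the coset through $f(p)$, i.e. when the left translate of $T_pS$ back to $T_eX$ equals the two-plane $T_e\Sigma$; equivalently, $G(p)$ equals one of the two unit normals $\pm\nu_\Sigma$ to $T_e\Sigma$ in $T_eX$. Since $G\colon S\to\S^2$ is a diffeomorphism, there are exactly two such points, $p_0=G^{-1}(\nu_\Sigma)$ and $p_1=G^{-1}(-\nu_\Sigma)$. To see these are nondegenerate and compute indices, I would observe that near such a point the function $h$ has the form of (minus) the signed distance, in the coset direction, from the tangent coset, and its Hessian is (up to the sign coming from which normal we picked and a positive factor from the metric identification) the second fundamental form of $f$ at $p$ with respect to the direction transverse to the coset. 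The key point is that $G$ being a local diffeomorphism at $p$ forces the shape operator of $f$ to be invertible there — indeed the differential of the left invariant Gauss map is, up to a bundle isomorphism, the shape operator plus a skew correction term coming from the structure constants, and more directly one can argue that degeneracy of $h$ at $p$ would force $dG_p$ to drop rank. Hence both critical points are Morse. Their indices must be $0$ and $2$: the two normals $\pm\nu_\Sigma$ are antipodal, $S$ is compact so $h$ attains a max and a min, and a Morse function on $S^2$ with exactly two critical points must have one of index $0$ and one of index $2$ (Euler characteristic $2 = 1 - 0 + 1$ rules out two saddles or a max/min plus a saddle). This simultaneously gives item (5), and items (2),(3) follow: $h(S)$ is a compact interval, which after reparameterizing is $[0,1]$, its two endpoints $h(p_0),h(p_1)$ are attained only at the single points $p_0,p_1$ (a nondegenerate max/min of a Morse function with no other critical values at that level is attained at one point), so $g(0)\Sigma$ and $g(1)\Sigma$ each meet $f(S)$ in exactly one point.

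For item (4), for $t\in(0,1)$ the level $h^{-1}(t)$ is a regular level of the Morse function $h$ on $S$, hence a disjoint union of embedded circles in $S$; since $h$ has no critical values in $(0,1)$ other than none — i.e. $t$ is a regular value and there are no critical points between the two ends — all level sets $h^{-1}(t)$ are isotopic, and passing from near $p_0$ (a single small circle bounding a disk, the descending/ascending sphere of the index-$0$ point) across the product region shows $h^{-1}(t)$ is a single circle for every $t\in(0,1)$. Regularity of $t$ means $df$ is transverse to the coset $g(t)\Sigma$ along $f^{-1}(h^{-1}(t))$, so $g(t)\Sigma$ meets $f(S)$ transversely, and the image $f(h^{-1}(t))$ is an immersed closed curve whose $f$-preimage is the simple closed curve $h^{-1}(t)\subset S$; connectedness of the curve in $X$ follows from connectedness of $h^{-1}(t)$ in $S$.

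\emph{Main obstacle.} The delicate step is the rigorous link between nondegeneracy of the critical points of $h$ and the fact that $G$ is a (local) diffeomorphism — i.e. showing that $dG_p$ being an isomorphism forces the Hessian of $h$ at a critical point $p$ to be nondegenerate, since in a general metric Lie group the differential of the left invariant Gauss map is not simply the shape operator but differs from it by a term built from the Lie-algebra structure constants. I would handle this by writing $G$ in the left-invariant frame $\{E_1,E_2,E_3\}$ of \eqref{eq:6*}, expressing $dG$ via the connection coefficients coming from \eqref{eq:8a}, and checking that at a point where $G(p)=\pm\nu_\Sigma$ the ``extra'' skew term does not interfere with the rank computation for the relevant second-order part of $h$; concretely, the Hessian of $h$ at such $p$ and the differential $dG_p$ have, in suitable coordinates, the same rank.
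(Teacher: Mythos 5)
Your items (1)--(4) are handled essentially as in the paper (the paper also locates the critical points of $h=\Pi^{\Sigma}_{\rm left}\circ f$ as the two preimages $G^{-1}(\pm\nu_\Sigma)$ and then calls the rest ``elementary''), but for the nondegeneracy in item (5) you take a genuinely different route. The paper avoids any curvature computation: it supposes a critical point is degenerate, perturbs $f$ by an explicit family $Q_t(x,y,z)=(x,y,z-t^2x^2)$ in adapted coordinates so that $\Pi^{\Sigma}_{\rm left}\circ f_t$ acquires at least three critical points, notes that having a diffeomorphic left invariant Gauss map is an open condition, and derives a contradiction with the already-established items (3)--(4) for $f_t$. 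Your route instead argues directly that degeneracy of the Hessian of $h$ at $p$ would force $dG_p$ to drop rank. That claim is in fact true, and the cleanest way to close the gap you flag as the ``main obstacle'' is not a frame-by-frame computation but a comparison with the coset $f(p)\Sigma$ itself: writing $N=\sum_i N_iE_i$, one has $dG(v)=\sum_i(vN_i)(E_i)_e=(dl)^{-1}\bigl(\nabla_vN-\sum_iN_i\nabla_vE_i\bigr)$, and the correction term $\sum_iN_i\nabla_vE_i$ depends only on the point, the common normal and $v$, hence is identical for $f(S)$ and for the tangent coset; since the coset has constant Gauss map, $dG_p(v)=(dl)^{-1}\bigl((S_{\rm coset}-S_{f(S)})(v)\bigr)$, while a standard computation at a critical point gives $\mathrm{Hess}_p(h)(v,w)=dF(N)\bigl(II_{f(S)}-II_{\rm coset}\bigr)(v,w)$ with $dF(N)\neq0$, so the two objects have the same kernel. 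With that lemma in place your argument is complete and arguably more conceptual; the paper's perturbation argument buys independence from any structure-constant bookkeeping at the cost of an openness/transversality digression. One small structural remark: the paper deliberately establishes (3)--(4) \emph{before} (5), using only the count ``at most two critical points'' (which needs no nondegeneracy), whereas you derive (2)--(4) from the Morse property; your ordering is fine once nondegeneracy is secured, but note that (3)--(4) do not actually require it.
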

\begin{proof}
Since $X$ admits a two-dimensional subgroup, then
$X$ is not isomorphic to $\su$ and so it is diffeomorphic to $\rth$.
In this case, the set of left cosets $X/\Sigma $
can be smoothly parameterized by $\R $. By compactness and connectedness of $f(S)$, we can
parameterize those left cosets that intersect $f(S)$ by $t\in [0,1]\mapsto g(t)\Sigma $.
After identifying $X/\Sigma $ with $\R $, we can view $\Pi^{\Sigma}_{\rm left}
\colon  X\to \R $ as the related smooth quotient map.
The critical points of $\Pi^{\Sigma}_{\rm left}\circ f$ are those points of
$S$ where the value of $G$ is
one of the two unit normal vectors to $\Sigma $. Since $G$ is bijective, then
$\Pi^{\Sigma}_{\rm left}\circ f$ has at most two critical points. On the
other hand, $\Pi^{\Sigma}_{\rm left}\circ f$ has at least two critical
points: a maximum and a  minimum. From here, the proof of
the first four statements in the lemma is elementary.

 It follows from items~(3) and (4) that
$\Pi^{\Sigma}_{\rm left} \circ f$ has exactly two critical points,
one where it takes on its minimum value
and one where it takes on its maximum value.
Suppose that  $\Pi^{\Sigma}_{\rm left} \circ f$ had a
degenerate critical point $p$, which is say the global minimum
of $\Pi^{\Sigma}_{\rm left} \circ f$. We next describe how to perturb $f$
to an immersion $f_t\colon S \looparrowright X$ for $t\geq 0$ small, so that $f_0=f$ and
$\Pi^{\Sigma}_{\rm left} \circ f_t$ has at least three critical points
when $t\neq0$. To do this, take a
diffeomorphism $\phi \colon X\to \rth$ satisfying:
\ben[A.]
\item $\phi (f(p))=(0,0,0)$ and $\phi (g\SS)=\R^2\times \{ \Pi^{\Sigma}_{\rm left}(g)\}$, for every $g\in X$.
\item Near $(0,0,0)$, $(\phi \circ f)(S)$ is expressed as the graph of a function $z=z(x,y)\geq0$
such that $z(0,0)=z_x(0,0)=z_y(0,0)=z_{xx}(0,0)=z_{xy}(0,0)=0$ and $z_{yy}(0,0)\geq 0$.
\een
Then, the family of diffeomorphisms $\{ Q_t\colon \R^3\to \R^3\ | \ t\in \R \} $ given by
$Q_t(x,y,z)=(x,y,z-t^2x^2)$ satisfies:
\ben[(a)]
\item $f_t=\phi ^{-1}\circ Q_t\circ \phi \circ f$ is a smooth 1-parameter of family
of immersions of $S$ into $X$ with $f_0=f$.
\item $\Pi^{\Sigma}_{\rm left}\circ f_t$ has a critical point
at $p$ with $(\Pi^{\Sigma}_{\rm left}\circ f_t)(p)=0$ (because $dQ_t(0,0,0)$ is the identity).
\item  For $t\neq0$ sufficiently small, near the point $(0,0,0)$, the surface $(\phi\circ f_t)(S)$ can
expressed as the graph of  a function $z_t=z_t(x,y)$, where  
$z_t(1/n,0)$ is negative for $n\in \N$ sufficiently large. 
Therefore, for $t>0$  sufficiently small, 
the global minimum value of $\Pi^{\Sigma}_{\rm left}\circ f_t$
is negative. 
\item  For $t>0$  sufficiently small,
the global maximum value of $\Pi^{\Sigma}_{\rm left}\circ f_t$ is close to 1. Therefore,
for $t>0$  sufficiently small,
$\Pi^{\Sigma}_{\rm left}\circ f_t$ must have  at least three critical points.
\een

By the openness of
the subspace of smooth immersed spheres in
$X$ whose left invariant Gauss maps are diffeomorphisms,
for $t>0$ sufficiently small,  the left invariant Gauss map
of $f_t  \colon S \looparrowright X$ is a
diffeomorphism.
But the existence of more than two critical points
of $\Pi^{\Sigma}_{\rm left} \circ f_t$ given in item (d) above
contradicts that items~(3) and (4)
 must hold for the perturbed immersion
$f_t$. Hence,
$\Pi^{\Sigma}_{\rm left} \circ f$ is a  Morse function with exactly two
critical points, where one critical
point (the minimum) has index 0 and the other one (the maximum)
has index 2. This
completes the proof of item~(5), and the lemma is proved.
\end{proof}

\begin{remark} {\em
Let $\Sigma $ be a two-dimensional subgroup and
consider the two oriented tangent planes $\pm T_e\SS\in G(2,T_eX)$  of $\SS$
at the identity element. Let $f\colon  S\looparrowright X$ be an immersed oriented sphere in $X$
 such that each of $T_e\Sigma $, $-T_e\Sigma $ has exactly one preimage
in $S$ by the map $G_{\mbox{\small left}}\colon S \to G(2,T_eX)$
introduced in (\ref{eq:Gleft}). Suppose that these preimages are regular
points of $G_{\mbox{\small left}}$. Then the conclusions of
Lemma~\ref{lemma3.2} hold with small
modifications in the proofs.}
\end{remark}

Next, we investigate some additional  properties of immersed spheres in
metric Lie groups $X$ that admit an algebraic open book decomposition.
Recall that, as explained in Subsection~\ref{sec:gauss},
the property of the Gauss map being a diffeomorphism only depends
on the Lie group structure of $X$ (and not
on the metric). Therefore, in the next two results
we do not lose generality by assuming that $X$ is equipped with the
canonical metric of $\R^2\rtimes_A \R$, where $A$ is
given in one of the Examples~~\ref{example1}, \ref{example2},
\ref{example3} or in equation~\eqref{Axieta}.

\begin{lemma}
\label{ass3.10}
Let $X$ be a metric Lie group which admits an algebraic open book
decomposition ${\mathcal B}$ with binding $\G$. If
$f\colon S\looparrowright X$ is an immersion of a sphere
whose left invariant Gauss map in $X$ is a diffeomorphism, then for
every $x\in X$, the set $f^{-1}(x\, \G )$ contains at most two points. Furthermore,
if $f^{-1}(x\, \G )$ consists of two points, then $x\G $ is transverse to $f$.
\end{lemma}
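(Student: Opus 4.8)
The plan is to exploit the fact that the binding $\G$ of an algebraic open book decomposition $\mathcal B=\{L(\t)\}$ is a one-parameter subgroup lying in every subgroup $H(\t)$, and to reduce the question about $f^{-1}(x\,\G)$ to the Transversality Lemma applied to one of the subgroups $H(\t)$. Since the Gauss map being a diffeomorphism depends only on the Lie group structure, I will assume $X=\R^2\rtimes_A\R$ with its canonical metric and, as recorded after Proposition~\ref{clasopenbook}, that $\G=\{(x,0,0)\mid x\in\R\}$ and $P_0=\R^2\rtimes_A\{0\}$ is one of the subgroups of $\mathcal B$. First I would observe that a left coset $x\,\G$ is a curve contained in the left coset $x\,H(\t)$ for \emph{every} subgroup $H(\t)$ of $\mathcal B$; indeed $\G\subset H(\t)$ for all $\t$, so $x\,\G\subset x\,H(\t)$.

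Next I would invoke Lemma~\ref{lemma3.2} with $\Sigma=H(\t)$ a two-dimensional subgroup: the composition $\Pi^{H(\t)}_{\rm left}\circ f$ is a Morse function on $S$ with exactly one minimum and one maximum, and for each $t\in(0,1)$ the coset $g(t)H(\t)$ meets $f(S)$ transversally in a single immersed closed curve whose $f$-preimage is a simple closed curve $\g_\t\subset S$. Now fix $x\in X$ and suppose $f^{-1}(x\,\G)$ contains $k$ points. Because $x\,\G$ is contained in the single left coset $x\,H(\t_0)=g(t_0)H(\t_0)$ for an appropriate value of the parameter (here $t_0\in[0,1]$ is determined by which coset of $H(\t_0)$ contains $x$), every point of $f^{-1}(x\,\G)$ lies in $f^{-1}(g(t_0)H(\t_0))$. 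If $t_0\in\{0,1\}$, the latter preimage is a single point by item~(3) of Lemma~\ref{lemma3.2}, so $f^{-1}(x\,\G)$ has at most one point. If $t_0\in(0,1)$, then $f^{-1}(g(t_0)H(\t_0))=\g_{\t_0}$ is a simple closed curve on which $f$ is an embedding onto its image, so $f^{-1}(x\,\G)$ is in bijection with $(f(S)\cap x\,\G)$, which is a subset of the single immersed closed curve $f(\g_{\t_0})$.

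It remains to see that this last intersection has at most two points and that, when it has two, $x\,\G$ is transverse to $f$. For this I would use the freedom in the choice of the subgroup $H(\t)$: the binding $\G$ equals the intersection $\bigcap_\t H(\t)$, and $x\,\G = x\,H(\t)\cap x\,H(\t')$ for any two distinct parameters $\t\neq\t'$ in $[0,\pi)$. The curve $f(\g_{\t_0})\subset g(t_0)H(\t_0)$ is a closed curve transverse to the one-dimensional foliation of $g(t_0)H(\t_0)\cong\R^2$ by the left cosets of $\G$ inside it; the reason is that the normal to $f$ along $\g_{\t_0}$ is never tangent to $H(\t_0)$ (that would force the Gauss map to hit $\pm T_eH(\t_0)$ along a whole curve, contradicting bijectivity of $G$), hence $f$ is transverse to $x\,\G$ wherever it meets it — which already gives the "furthermore" clause. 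For the count, within the plane $g(t_0)H(\t_0)\cong\R^2$ with its foliation by cosets of $\G$, the closed curve $f(\g_{\t_0})$, being transverse to the foliation, is a simple closed curve when pushed to the quotient $\cong\R$; but a circle mapping to $\R$ as an immersion with image meeting a point $x\,\G$ transversally must meet it an even number of times, and a standard Morse-theoretic argument (the restriction of the quotient coordinate to $\g_{\t_0}\cong\S^1$ is a Morse function with exactly two critical points, again by bijectivity of $G$ forcing at most two and compactness forcing at least two) shows the number of preimages of any regular value is exactly two and of the two critical values is exactly one. Hence $\#f^{-1}(x\,\G)\leq 2$, with equality only at regular values, where $x\,\G$ is transverse.

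The main obstacle I anticipate is the bookkeeping in the final paragraph: making precise that the curve $\g_{\t_0}$ projects to a Morse function with two critical points on the \emph{one}-dimensional quotient $g(t_0)H(\t_0)/\G\cong\R$, and that this is again controlled by the Gauss map being a diffeomorphism — one must check that the critical points of this projection correspond to the points of $\g_{\t_0}$ where $G$ takes one of the two values in $\S^2\cap T_eH(\t_0)$ normal (inside $H(\t_0)$) to $\G$, and that bijectivity of $G$ on $S$ limits these to two. The rest is an application of Lemma~\ref{lemma3.2} together with the elementary geometry of the nested subgroups $\G\subset H(\t)\subset X$.
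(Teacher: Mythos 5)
There is a genuine gap in your final counting step, and it occurs exactly where the real work of this lemma lies. You reduce to the closed curve $\g_{\t_0}=f^{-1}(g(t_0)H(\t_0))$ and claim that the projection of $f(\g_{\t_0})$ to the one-dimensional quotient $g(t_0)H(\t_0)/\G\cong\R$ is a Morse function on a circle with exactly two critical points ``by bijectivity of $G$.'' This is not correct. A critical point of that projection is a point $p\in\g_{\t_0}$ at which the tangent line to $f(\g_{\t_0})$ points in the direction of the left coset of $\G$, i.e., at which the left-translated tangent plane of $f$ contains the left invariant direction determined by $T_e\G$. Equivalently, $G(p)$ lies on the great circle $\{v\in\S^2 \mid v\perp T_e\G\}$ --- not at one of two antipodal points. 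Since $G$ is a diffeomorphism, the preimage of that great circle is a simple closed curve $C\subset S$, and the critical points in question are the points of $\g_{\t_0}\cap C$, an intersection of two closed curves on $S$ for which bijectivity of $G$ gives no bound whatsoever. Your identification of these critical points with the two points where $G$ equals the unit vectors of $T_eH(\t_0)$ orthogonal to $\G$ is mistaken: at such a critical point $N(p)$ is merely orthogonal to the $\G$-direction and need not be tangent to the coset of $H(\t_0)$. The same confusion undermines your ``furthermore'' clause: transversality of $f$ to the two-dimensional coset $g(t_0)H(\t_0)$ (which is what the Transversality Lemma gives) does not imply transversality of $f$ to the one-dimensional coset $x\G$ inside it. A secondary inaccuracy: item (4) of Lemma~\ref{lemma3.2} only says that $f(S)\cap g(t_0)H(\t_0)$ is a connected \emph{immersed} closed curve; $f|_{\g_{\t_0}}$ may fail to be injective and may even cover its image with multiplicity $m>1$, so the bijection you assert between $f^{-1}(x\G)$ and $f(S)\cap x\G$ is unjustified.

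The ingredient you are missing is that the open book supplies not one subgroup but a whole circle of them, all containing $\G$, and the bound by $2$ comes from a global topological count rather than from a Morse function in a single plane. The paper's argument runs as follows: after reducing (via analyticity and small perturbations) to the case where $\G$ is transverse to $f$ and $f^{-1}(\G)$ consists of an even number $2n\geq 4$ of points, one considers the angular field $\partial_\t$ transverse to the leaves $L(\t)$ of $\mathcal{B}$ on $X-\G$; the Transversality Lemma applied to \emph{every} subgroup $H(\t)$ shows that each leaf meets $f(S)$ transversely, so the tangential component $\partial_\t^T$ pulls back to a nowhere-zero vector field on $S-f^{-1}(\G)$ having index $+1$ at each of the $2n$ punctures, and Poincar\'e--Hopf forces $2n=\chi(S)=2$, a contradiction. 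To complete your proof you would need either this index argument or a genuine substitute for it; the single-plane Morse count does not close.
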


\begin{proof}
As explained in Subsection~\ref{subsec:openbook}, we can assume
that $X$ is a semidirect product $\R^2\rtimes_A\R$, that the
binding $\Gamma$ is the $1$-parameter subgroup $\{(x,0,0) \, \mid \, x\in \R\}$
and that the $(x,y)$-plane is one of the subgroups of $\cB$.

To prove that given $x\in X$ the set $f^{-1}(x\, \G )$
contains at most two points, we proceed by contradiction:
suppose that this assertion fails to
hold. We first find a contradiction in the case $f$ is analytic. In this case,
there is some $x\in X$ for which the compact analytic set
$f^{-1}(x\, \G )$ has more than two points; note that this set
has a finite number $k\geq 3$ of elements since $f(f^{-1}(x\, \G ))$ is a finite (compact)
analytic set of the non-compact curve $x\G$ and $f$
is a finite-to-one mapping. After left translating $f(S)$
by $x^{-1}$ (this does not change the left invariant Gauss map of
$f$), we can assume $\G =x\, \G $.

We claim that by choosing some $a\in
X$ arbitrarily close to the identity element $e$, the left translated binding $a\, \G$ is
transverse to $f(S)$ and the cardinality of $f^{-1}(a\, \G )$ is an
even integer greater than or equal to four. This fact can
be seen as follows. Since the $(x,y)$-plane  is a subgroup $H(\t_0)$
of $\mathcal{B}$ and $f^{-1}(H(\t _0))$ has more than one point, then
the Transversality Lemma~\ref{lemma3.2} implies
that $H(\t_0)$ intersects $f(S)$ transversely along a connected,
analytic immersed closed curve $\a$. Again by Lemma~\ref{lemma3.2},
$f^{-1}(\a)$ is a connected simple closed curve in $S$ which covers
the curve $\a$ (considered to be an immersion of $\esf^1$) with
fixed integer multiplicity $m\geq 1$. Suppose for
the moment that $m=1$. Since $\a$ is analytic, the
self-intersection set of $\a$ is finite. Since the $x$-axis $\G
\subset H(\t_0)$ intersects $\a$ in at least
three points (we are using that $m=1$ here), then,
elementary transversality theory ensures that for some
$t\neq 0$ sufficiently small and for $a(t)=(0,t,0)\subset H(\t_0)\subset \R^2\rtimes_A
\R $, $a(t)\, \G$ intersects $f(S)$ transversely in an even number of
points greater than 2, all of which are disjoint from the self-intersection set of
$\a$. Moreover, under these conditions $f^{-1}(a(t)\, \G )$
contains at least four points. This completes the proof of the claim if $m=1$. If $m>1$, then
for some $t\neq 0$ sufficiently small, $a(t)\, \G$
intersects $f(S)$ transversely in at least two points,
with  all of these intersection points being disjoint from the self-intersection set of
$\a$. Therefore,  $f^{-1}(a(t)\, \G)$ contains at
least $2m\geq 4$ points and our claim also holds in this case.

By the discussion in the previous paragraph and
after left translating $f(S)$
by $a^{-1}$, we may assume that $\G$
is transverse to $f(S)$ and $f^{-1}(\G )=\{ p_1,\ldots ,p_{2n}\}
\subset S$ with $n$ an integer, $n\geq 2$. Now consider the normal
variational vector field $\partial _{\t }$ to the leaves of the
product foliation $\mathcal{B}=\{ L(\t )\ | \ \t \in [0,2\pi )\} $
(these $L(\t)$ are the topological open halfplanes that appear in Definition~\ref{def:al-book}),
which is defined in $X-\G $. Since each leaf $L(\t )$ intersects $S$
transversely (again by the Transversality Lemma~\ref{lemma3.2}),
then the pullback by $f$ of the tangential component
of the restriction of $\partial _{\t }$ to $f(S)-\G $, defines a
vector field $\partial _{\t }^T$ on $S-f^{-1}(\G )= S-\{ p_1,\ldots
,p_{2n}\} $, and $\partial _{\t }^T$ has no zeros in $S-f^{-1}(\G
)$. The fact that the immersion $f$ is transverse to the binding $\G
$ of $\mathcal{B}$ implies that the index of $\partial _{\t }^T$ at
each of the points $p_j$ is $+1$. Then, by the Poincar\'e-Hopf index theorem,
the Euler characteristic of $S$ would be $2n\geq  4$, which is false
since the Euler characteristic of $S$ is 2. This is the desired contradiction in the
case that $f$ is analytic; thus, given $x\in X$, the
set $f^{-1}(x\, \G )$ contains at most two points in this case.

We next prove that  given $x\in X$ the set $f^{-1}(x\, \G )$
contains at most two points in the smooth case for $f$.
Arguing again by contradiction, suppose that for some $x\in X$, the set
$f^{-1}(x\, \G )$ has at least three points, $p_1,p_2,p_3$.
As before, we can assume $x\G =\G $.
By the Transversality Lemma~\ref{lemma3.2}, there exist
pairwise disjoint compact disks $D_1,D_2,D_3\subset S$
such that $p_i\in \Int (D_i)$ and $f(D_i)$ intersects
the $(x,y)$-plane in smooth compact arcs $\be _i$, $i=1,2,3$.
By perturbing $f$ slightly in $\Int(D_i)$, we can assume that
\ben[(A)]
\item Each $\be _i$ intersects $\G $ transversely at some point.
\item The left invariant Gauss map of the perturbed $f$ is a diffeomorphism.
\een
Since we can approximate
$f$ by analytic immersions with the properties (A) and (B), then we contradict the previously proved
analytic case for $f$. This contradiction finishes the proof of the first statement of the lemma.

It remains to show that if $f^{-1}(x\G)$ consists of two points $p_1,p_2$
for some $x\in X$, then $x\G$ is transverse to $f$.
If not, then $x\G $ intersects tangentially to $f(S)$ in at least one point say,
$f(p_1)$. After an arbitrarily  small smooth perturbation $\wh{f}$ of $f$ in a  small compact disk
neighborhood $D_1$ of  $p_1$ that is disjoint from $p_2$, we can suppose
that $\wh{f}|_{D_1}$ is injective and intersects $x\SS$ transversely in
an embedded arc, and this arc intersects
$x\G$  in at least two points. It follows that $\wh{f}^{-1}(x\G)\cap D_1$ contains at least 2 points and
$p_2 \in \wh{f}^{-1}(x\G)\cap (S-D_1)$, which means that $\wh{f}^{-1}(x\G)$ contains at least 3 points.
As in the previous paragraph, the
left invariant Gauss map of $\wh{f}$ can be assumed to be a diffeomorphism,
which contradicts the previously proved statement that the set
$\wh{f}^{-1}(x\G)$
contains at most 2 points. This contradiction completes the proof that
if $f^{-1}(x\G)$ consists of two points $p_1,p_2$
for some $x\in X$, then $x\G$ is transverse to $f$.
\end{proof}

\begin{theorem}
 \label{thm3.4}
Let $X$ be a metric Lie group which admits an algebraic open book
decomposition ${\mathcal B}$ with binding $\G$. Let $\Pi\colon  X\to
X/\G \cong \R^2$ be the related quotient map to the space of left
cosets of $\G$. If $f\colon  S\looparrowright X$ is an oriented immersion of a
sphere whose
left invariant Gauss map 
is a diffeomorphism, then:
\begin{enumerate}[(1)]
\item ${\mathcal D}=\Pi(f(S))$ is a 
compact embedded disk in $\R^2$ and $f(S)\cap\Pi^{-1}({\rm Int}({\mathcal D}))$
consists of two components $C_1, C_2$ such that $\Pi|_{C_j} \colon  C_j \to
{\rm Int}({\mathcal D})$ is a diffeomorphism  for $j=1,2$.
\item $f(S)$ is an embedded sphere (i.e., $f$ is an injective immersion).
\end{enumerate}
\end{theorem}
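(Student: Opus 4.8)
The plan is to exploit the binding $\G$ and the algebraic open book decomposition $\mathcal{B}$ to organize $f(S)$ as a "bigraph" over a disk in $X/\G\cong\R^2$. First I would fix the model $X=\R^2\rtimes_A\R$ with $\G=\{(x,0,0)\mid x\in\R\}$ and the $(x,y)$-plane $P_0$ among the subgroups of $\mathcal{B}$, as in Subsection~\ref{subsec:openbook}. The quotient map $\Pi\colon X\to X/\G$ can be taken to be $\Pi(x,y,z)=(y,z)\in\R^2$, and its fibers are exactly the left cosets of $\G$. Since $f$ is an immersion of a compact sphere, $\mathcal{D}=\Pi(f(S))$ is compact. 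The key input is Lemma~\ref{ass3.10}: each left coset $x\G$ meets $f(S)$ in at most two points, and when it meets it in two points the intersection is transverse. Thus $\Pi\circ f\colon S\to\R^2$ is at most two-to-one; the set of points where it is "one-to-one" (i.e.\ the coset is tangent, by Lemma~\ref{ass3.10}) is where $\Pi\circ f$ has rank $<2$, and these are precisely the points of $S$ whose left invariant Gauss map value is $\pm T_eP_0$-like, i.e.\ where the tangent plane of $f(S)$ contains the $\G$-direction. Because $G$ is a diffeomorphism onto $\S^2$, and the condition "tangent plane contains the binding direction $\partial_x$ at $e$" cuts out a great circle $\mathcal{C}\subset\S^2$, the branch locus $B=G^{-1}(\mathcal{C})$ is an embedded circle in $S$ separating $S$ into two open disks $\Delta_1,\Delta_2$, with $\Pi\circ f$ a local diffeomorphism on each $\Delta_j$.

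Next I would show that $\Pi|_{f(\Delta_j)}$ is injective, hence a diffeomorphism onto an open subset of $\R^2$, and that the two images coincide, giving $\mathcal{D}$ and the splitting $f(S)\cap\Pi^{-1}(\Int\mathcal{D})=C_1\cup C_2$ of statement~(1). Injectivity on each $\Delta_j$ follows from the two-point bound: if two points of $\Delta_j$ had the same $\Pi$-image, then together with a nearby point of the other sheet we would exceed the cardinality bound of Lemma~\ref{ass3.10} — more carefully, one uses that $\Pi\circ f|_{\Delta_j}$ is a proper local diffeomorphism onto its image minus a measure-zero set, together with connectedness, to upgrade it to a global diffeomorphism onto $\Int\mathcal{D}$. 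That $\mathcal{D}$ is a closed disk (not an annulus or worse) comes from analyzing $\Pi\circ f$ along the branch circle $B$: the restriction $\Pi\circ f|_B\colon B\to\partial\mathcal{D}$ is an embedding because at a branch point the two sheets meet tangentially from the same side (the function $\Pi\circ f$ restricted to a transverse slice has a nondegenerate extremum there, as in the fold-singularity analysis), so $f$ looks locally like a fold, and $\mathcal{D}$ is bounded by the embedded curve $\Pi(f(B))$; Schoenflies (or simply that a compact surface with boundary in $\R^2$ bounded by an embedded circle and covered $2$-to-$1$ in the interior is a disk) then gives that $\mathcal{D}$ is an embedded disk. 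This also identifies $C_1=f(\Delta_1)$, $C_2=f(\Delta_2)$.

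For statement~(2), embeddedness of $f(S)$, I would argue that the two graphs $C_1,C_2$ over $\Int\mathcal{D}$ do not intersect each other and each is embedded. Each $C_j$ is embedded since $\Pi|_{C_j}$ is injective. If $C_1$ and $C_2$ met at an interior point $q$ over $w\in\Int\mathcal{D}$, then the coset $\Pi^{-1}(w)=x\G$ would meet $f(S)$ at $q$ with multiplicity coming from both sheets; since that coset meets $f(S)$ in at most two points and transversally, $C_1$ and $C_2$ must cross transversally there, and one can then find a nearby coset meeting $f(S)$ in at least three points — contradicting Lemma~\ref{ass3.10} — unless $C_1$ and $C_2$ are globally disjoint over $\Int\mathcal{D}$. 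Finally, along $f(B)$ (over $\partial\mathcal{D}$) the two sheets come together to form the single smooth fold curve, and the cardinality bound forces this gluing to be injective, so $f(S)=C_1\cup f(B)\cup C_2$ is an embedded sphere. The main obstacle I anticipate is the global step in this paragraph and the preceding one: turning the pointwise "at most two preimages, transversally" statement of Lemma~\ref{ass3.10} into the global conclusion that the two sheets are disjoint graphs over a disk and glue along a single fold — this requires ruling out, by a perturbation/transversality argument in the spirit of the proof of Lemma~\ref{ass3.10} (perturb $f$ near a putative triple or tangential configuration, keep the Gauss map a diffeomorphism by openness, and produce a coset with $\geq 3$ preimages), any monodromy or self-overlap of the sheets. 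The Transversality Lemma~\ref{lemma3.2}, applied to the subgroup $P_0$ and to the other subgroups $H(\theta)$ of $\mathcal{B}$, is exactly what controls the level sets $\Pi^{-1}(\text{line})\cap f(S)$ and keeps this argument honest.
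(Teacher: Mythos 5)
Your argument for item~(1) is essentially the paper's: Lemma~\ref{lemma3.2} applied to the subgroup $P_0=\R^2\rtimes_A\{0\}$ slices $f(S)$ into connected immersed closed curves $f(S)\cap P_z$, and Lemma~\ref{ass3.10} forces each such curve to be a bigraph over an interval in the direction of the binding; the two families of arcs assemble into the two $\Pi$-graphical disks $C_1,C_2$ over $\Int({\mathcal D})$. (The paper organizes this by the slices $P_z$ rather than by the fold circle $G^{-1}(\mathcal C)$, but that is a cosmetic difference.)

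The gap is in item~(2), exactly at the step you flag as the ``main obstacle'', and the fix you propose does not work. Lemma~\ref{ass3.10} bounds the cardinality of $f^{-1}(x\G)$, which is a subset of $S$. If $C_1$ and $C_2$ were to intersect at a point $f(p_1)=f(p_2)$ over $w\in\Int({\mathcal D})$, with $p_1\in\Delta_1$ and $p_2\in\Delta_2$, then $f^{-1}(\Pi^{-1}(w))=\{p_1,p_2\}$ still has exactly two elements and the coset is transverse to $f$ at both of them; no conclusion of Lemma~\ref{ass3.10} is violated. Moreover, a transverse crossing of the two sheets is stable: every nearby coset still meets each graph $C_j$ in exactly one point of $S$, so neither moving the coset nor perturbing $f$ produces a coset with three preimages. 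The ``at most two preimages, transversally'' statement is simply insufficient to separate the two sheets --- a closed curve that is a bigraph in one direction can perfectly well have its two arcs crossing each other. This is precisely why the paper's proof of item~(2) needs extra input: when the matrix $A$ is diagonal, $X$ carries a \emph{second} algebraic open book decomposition with binding $\G_2=\{(0,y,0)\mid y\in\R\}$ transverse to $\G$ inside $P_0$, so each slice curve $f(S)\cap P_z$ is a bigraph in two independent directions of the plane $P_z$ and is therefore embedded, whence $f(S)$ is embedded; for the two remaining Lie group structures (${\rm Nil}_3$ and the non-unimodular group with $D$-invariant $D=1$ not isomorphic to $\H^3$), which admit no diagonal model, the paper runs a limiting argument, approximating $X$ by metric Lie groups $\R^2\rtimes_{A(n)}\R$ isomorphic to groups with diagonal matrices, and uses the openness of the Gauss-map condition together with the stability of a transverse self-intersection to reach a contradiction. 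Your proof needs to supply one of these mechanisms (or a genuinely new one); as written, the disjointness of $C_1$ and $C_2$ does not follow.
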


\begin{proof}
As explained in Section~\ref{subsec:openbook},
we can
assume without loss of generality that $X$ is a semidirect product
$\R^2\rtimes _A\R $ equipped with its canonical metric, the binding
$\G $ is $\{ (x,0,0)\ | \ x\in \R \}$ and $P_0=\R^2\rtimes _A\{ 0\} $
is one of the subgroups of $\mathcal{B}$.  It also follows from the
classification in~\cite[Theorem~3.6]{mpe11}
of the algebraic open book decompositions
in metric Lie groups, that the $2\times 2$-matrix $A$ can be chosen so that
its position $a_{21}$ vanishes. This implies that given $z\in \R $,
the matrix $e^{zA}$ is upper triangular, which gives that
each left translate of $\G$ that intersects $P_z=\R^2\rtimes _A\{ z\}$
corresponds to a straight line in $P_z$ that is of the form
$\G_{y,z}=\{(x,y,z) \mid x\in \R\}$.
Let $z_0<z_1$ be the numbers such that $f(S)$ is contained in the
region $\R^2\rtimes _A [z_0,z_1]$ and intersects each of the planes
$P_{z_0}$, $P_{z_1}$ at single points $p_0, p_1$, respectively (see
Lemma~\ref{lemma3.2}). After a left translation, we can suppose $z_0=0$.

Orient the binding $\G=\{(x,0,0) \mid x\in \R\}$ by the usual orientation on $\R$.
Also orient the sphere $S$, orient the planes $P_z$ by
$E_3=\partial_z$ and the space $\R^2\rtimes _A\R $ by the ordered
triple $E_1,E_2,E_3$ given in (\ref{eq:6*}). By
Lemma~\ref{lemma3.2}, each of the oriented planes $P_z$, $z\in
(0,z_1)$, is transverse to $f$ and so $f^{-1}(P_z)$ is a smooth,
embedded, oriented Jordan curve $\a_z$ in $S$, where the orientation
is the homological one arising from the ordered intersection
$P_z\cap f(S)$. Note that with respect to the induced metric, each
plane $P_z$ is intrinsically flat and it is foliated by the collection
of parallel lines $\G_{y,z}=\{(x,y,z) \mid x\in \R\}$ as $y\in \R$ varies.
Thus, we can view the quotient map $\Pi\colon  X\to X/\G \cong \R^2$ as the
natural projection $(x,y,z)\in \R^2\rtimes _A\R \mapsto (y,z) \in \R
^2$ in the standard coordinates of $\R^2\rtimes _A\R $.

By
Lemma~\ref{lemma3.2}, for each $z\in (0,z_1)$, $\Pi(f(\a_z))=I_z$
is a compact interval with nonempty interior, $I_z$
is contained in the line $\{(0,y,z)\ | \ y\in \R \}$ and
the union of these intervals $I_z$ for $z\in (0,z_1)$ together
with the two points $\Pi (p_0), \Pi (p_1)$ is the compact embedded
disk ${\mathcal D}=\Pi(f(S))$. By Lemma~\ref{ass3.10}, the interior of each of the intervals
$I_z$ lifts through the local diffeomorphism $\Pi $ to two open connected arcs $I_z^1,
I_z^2\subset f(\a_z)$ with common extrema, where the superindices
$j=1,2$ are consistently defined for all $z$, so that the velocity
vector to $I_z^1$ at its starting extremum (resp. ending extremum)
points at the direction of $\partial _x$ (resp. of $-\partial _x$), and the velocity
vector to $I_z^j$ at any of its (interior) points is never in the direction of $\pm \partial _x$,
see Figure~\ref{figthm3-10}.
\begin{figure}
\begin{center}
\includegraphics[height=6.5cm]{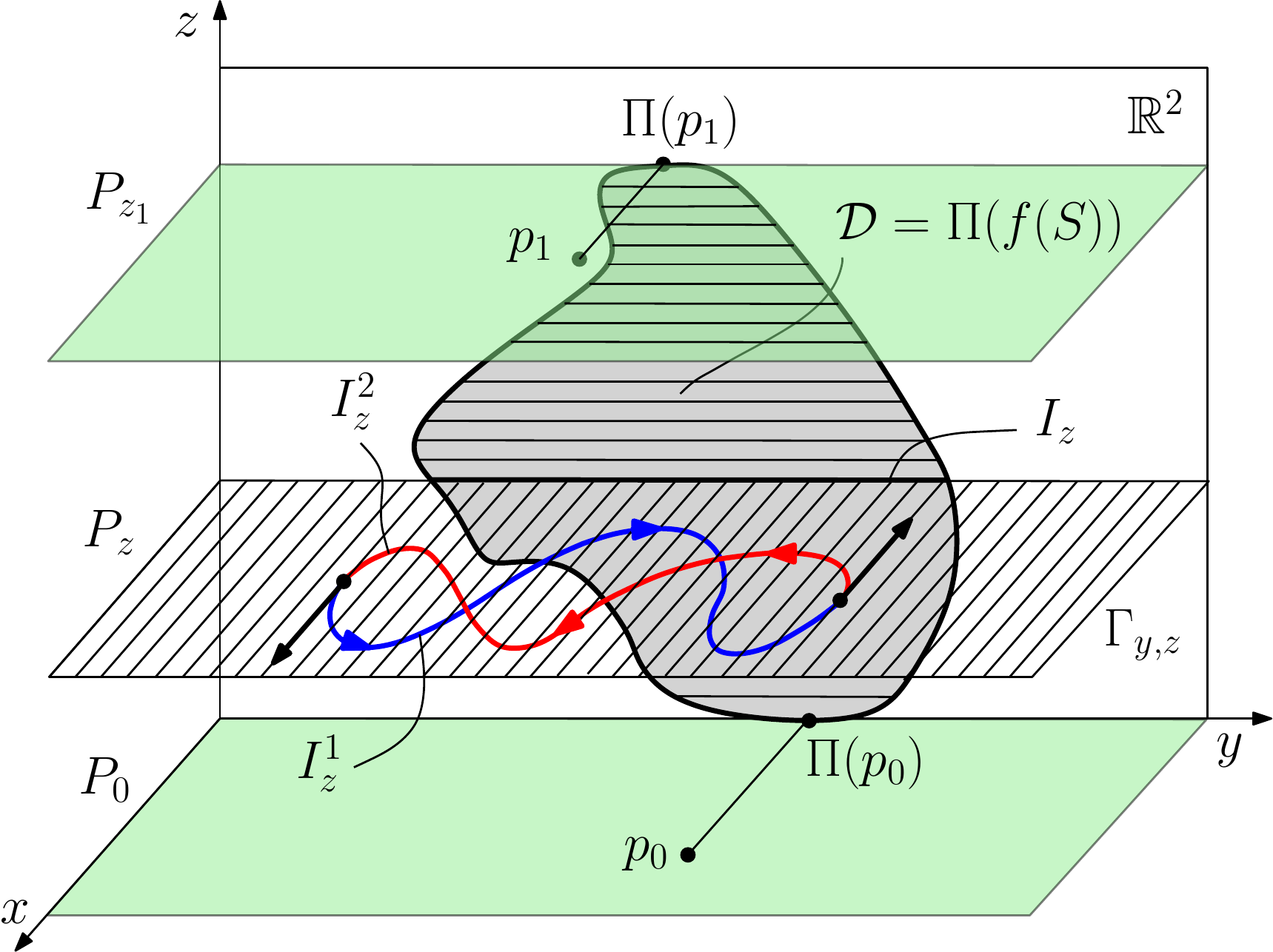}
\caption{Proof of Theorem~\ref{thm3.4}; the immersed curve $f(\a
_z)=f(S)\cap P_z$ consists of two Jordan arcs $I_z^1,I_z^2$ with the
same extrema.} \label{figthm3-10}
\end{center}
\end{figure}
For $j=1,2$ fixed, let $C_j$ be the open disk in
$f(S)$ given by the union of the arcs $I_z^j$ as $z$ varies.
Thus, we have proved that $f(S)\cap\Pi ^{-1}(\mbox{Int}({\mathcal
D}))$ decomposes as a disjoint union of the disks $C_1,C_2$ with
common boundary $\partial C_1=
\partial C_2$, each $C_j$ is $\Pi $-graphical
onto $\mbox{Int}({\mathcal D})$, and $\Pi |_{C_j}$ is a
diffeomorphism from $C_j$ onto $\Int({\mathcal D})$.
This completes the proof of item~(1) of the theorem.

To prove item~(2), 
first assume that
$X=\R^2\rtimes _A\R $ for some diagonal matrix $A\in
\mathcal{M}_2(\R )$. Then, as shown in Section~3 of~\cite{mpe11}
(see in particular~\cite[Example 3.4]{mpe11}), $X$ admits two
different algebraic open book decompositions $\cB_1,\cB_2$ with
respective orthogonal bindings $\G_1=\{ (x,0,0)\ | \ x\in \R \} $,
$\G_2=\{ (0,y,0)\ | \ y\in \R \} $. By our previous arguments, for
$z\in (0,z_1)$, the immersed curve $\a_{z}$ can be also expressed
as a ``bigraph" over appropriately chosen intervals in the lines
$\{(0,y,z)\mid y\in \R\}$, $\{(x,0,z)\mid x\in \R\}$, which implies
that each such $\a_z$ is embedded.  Since all of the curves $\a_z$
are embedded, then $f(S)$ is an embedded sphere. This proves that
item~(2) holds provided that $X=\R^2\rtimes _A\R $ for some diagonal
matrix $A\in \mathcal{M}_2(\R )$.

Finally assume that $X$ admits an algebraic open book decomposition,
but is not isomorphic to $\R^2\rtimes _A\R $ for some diagonal
matrix $A\in \mathcal{M}_2(\R )$.  By Theorem~3.6 in~\cite{mpe11},
$X$ is isomorphic either to Nil$_3$ or to the simply connected,
three-dimensional  Lie group with $D$-invariant $D=1$ which is not
isomorphic to $\Hip ^3$. First consider the case where the underlying
Lie group $G$ is the one with $D$-invariant $D=1$ which is not
isomorphic to $\Hip ^3$, and fix the related matrix to be
$A=\left(\begin{array}{cr} 1 & 1\\ 0& 1\end{array}\right)$. Then $X=\R^2\rtimes_A\R$
is isomorphic to~$G$.  Given $n\in \N$,
let $A(n)=\left(\begin{array}{cr} 1 & 1\\ 1/n^2 & 1\end{array}\right)$
and let $X_n=\R^2\rtimes_{A(n)}\R$ be the associated
metric Lie group.  Since $\lim_{n\to \infty} A(n)=A$,
then in the associated $\rth=\R^2\times \R$-coordinates
on these spaces $X_n$, compact balls in these coordinates
for $X_n$ converge smoothly as Riemannian manifolds to the
related compact balls in these coordinates for $X$.  Since $X_n$ is isomorphic  to a non-unimodular Lie group
with $D$-invariant $D_n=1- 1/n^2$, then $X_n$ is also isomorphic
to the Lie group $\R^2\rtimes _{B_n}\R $
where $B_n$ is the diagonal matrix
\[
B_n=\left(\begin{array}{cr} 1+1/n & 0\\ 0& 1-1/n\end{array}\right).
\]
Note that we can view $f\colon S\looparrowright X$ also as an immersed sphere $f\colon S\looparrowright X_n$.
Furthermore, as explained just before
the statement of Lemma~\ref{ass3.10}, the property that
the left invariant Gauss map of
$f$ is a diffeomorphism is independent of the left invariant
metric chosen on $X$ (or on $X_n$).
In particular, by the convergence of $X_n$ to $X$ explained above, for $n$ large enough, the left
invariant Gauss map of the immersed sphere $f(S)$ in $X_n$ is a
diffeomorphism (for any left invariant metric on $X_n$).
Since $B_n$ is a diagonal matrix, then $f\colon S\looparrowright X_n$
is an embedding by the previous paragraph. Finally, suppose
that $f\colon  S\la X$ is not an embedding. Then, a small perturbation $f_{\varepsilon}$
of $f\colon  S\la X$ in the same ambient space can be assumed
to intersect itself transversely at some point, and to still have the property that its left invariant Gauss map is a diffeomorphism.
This clearly implies by the previous arguments
that $f_{\varepsilon}\colon S\looparrowright X_n$ is not an embedding but its left invariant Gauss map is a diffeomorphism. This is a contradiction that completes the proof of the theorem when the Lie group
structure of $X$ is the one with $D$-invariant $D=1$ which is not
isomorphic to $\Hip ^3$.

Finally, if $X$ is isomorphic to Nil$_3$, then one can do a
similar argument as above, taking into account that
for every non-unimodular Lie group different from $\Hip ^3$ there
exists a sequence of left invariant metrics on it such that the corresponding
sequence of metric Lie groups converges to Nil$_3$ with its
standard metric, see the last paragraph in Section~2.8 of~\cite{mpe11}
for details on this argument.
Now the proof is complete.
\end{proof}

\begin{remark}\label{torralbo}

\emph{Let $X$ be a homogeneous manifold diffeomorphic to $\S^3$.
Then, we can view $X$ as the Lie group $\su$ endowed with a left
invariant metric. We show next that there exist immersed spheres in
$X$ whose left invariant Gauss map is a diffeomorphism to $\S^2$, but
that are not embedded, i.e., they self-intersect. Note that this
statement is independent of the left invariant metric chosen
on $\su$ for $X$; see Subsection~\ref{sec:gauss}.}

\emph{A metric Lie group $X=(\su, g_0)$ where $g_0$ is a left invariant
metric on $\su$ with a four-dimensional isometry group is usually called
a \emph{Berger sphere}. In~\cite{tor1}, Torralbo showed that in some
Berger spheres there exist rotationally symmetric constant mean curvature
spheres that are non-embedded. On the other hand, the authors proved
with Ros in~\cite{mmpr4} that if $S$ is a constant mean curvature sphere
in a homogeneous manifold diffeomorphic to $\S^3$, then the left invariant
Gauss map of $S$ is a diffeomorphism to $\S^2$.}

\emph{These two facts together prove the claimed existence of non-embedded
spheres in $\su$ whose left invariant Gauss map is a diffeomorphism.}
\end{remark}

\begin{remark} \label{rem:left-right} {\em
Let $\star $ denote the group multiplication of
a metric Lie group $X$ that admits an algebraic open book decomposition and let
$\circ $ be the ``opposite" multiplication:
$a\circ b=b\star a$, \ $a,b\in X$. Then the map $\sigma(a)= a^{-1}$ gives
a Lie group isomorphism between $(X,\star )$ and $(X,\circ )$.
Let $f\colon  S\looparrowright X$ be an immersed oriented sphere in $(X,\star)$.
Then, using the notation in (\ref{eq:Gleft}), the right invariant Gauss map
$G_{\mbox{\small right}}$ of $f$ in $(X,\star )$ is the
left invariant Gauss map $\wh{G}_{\mbox{\small left}}$ of the immersion
$f\colon S\looparrowright (X,\circ )$.
Since the algebraic open
book decompositions of $(X,\circ )$ are the same as the algebraic open
book decompositions of $(X,\star )$,
then Theorem~\ref{thm3.4} implies that
if $f\colon  S\looparrowright X$ is an
immersion of a sphere whose right invariant Gauss map is a
diffeomorphism, then $f(S)$ is an embedded
sphere.
}
\end{remark}

\begin{remark}{\em
In view of Theorem~\ref{main} and the
previous remark, we have  the following
natural open problem: \emph{Let $X$ be a Lie group
diffeomorphic to $\R^3$, and let $S$ be an immersed sphere in $X$ whose
left (or right) invariant Gauss map  is a
diffeomorphism to $G(2,T_eX)$. Is $S$ an embedded sphere?}.

By work in~\cite{mmpr4}, this problem
is closely related to the conjecture mentioned in the introduction: {\em Any constant
mean curvature sphere in a homogeneous
manifold diffeomorphic to $\R^3$ is embedded.}}
\end{remark}

\bibliographystyle{plain}
\bibliography{bill}

\begin{thebibliography}{1}

\bibitem{abro1}
U.~Abresch and H.~Rosenberg.
\newblock A {H}opf differential for constant mean curvature surfaces in
  ${\esf}^2\times\mathbb{R}$ and $\mathbb{H}^2\times \mathbb{R}$.
\newblock {\em Acta Math.}, 193(2):141--174, 2004.
\newblock MR2134864 (2006h:53003), Zbl 1078.53053.

\bibitem{abro2}
U.~Abresch and H.~Rosenberg.
\newblock Generalized {H}opf differentials.
\newblock {\em Mat. Contemp.}, 28:1--28, 2005.
\newblock MR2195187, Zbl 1118.53036.

\bibitem{dm2}
B.~Daniel and P.~Mira.
\newblock Existence and uniqueness of constant mean curvature spheres in
  {S}ol$_3$.
\newblock {\em Crelle: J Reine Angew Math.}, 685:1--32, 2013.
\newblock MR3181562, Zbl 1305.53062.

\bibitem{mmpr4}
W.~H. Meeks~III, P.~Mira, J.~P\'{e}rez, and A.~Ros.
\newblock Constant mean curvature spheres in homogeneous three-spheres.
\newblock Preprint at http://arxiv.org/abs/1308.2612.

\bibitem{mpe11}
W.~H. Meeks~III and J.~P\'{e}rez.
\newblock Constant mean curvature surfaces in metric {L}ie groups.
\newblock In {\em Geometric Analysis}, volume 570, pages 25--110. Contemporary
  Mathematics, edited by J. Galvez, J. P\'{e}rez, 2012.
\newblock MR2963596, Zbl 1267.53006.

\bibitem{mil2}
J.~W. Milnor.
\newblock Curvatures of left invariant metrics on {L}ie groups.
\newblock {\em Advances in Mathematics}, 21:293--329, 1976.
\newblock MR0425012, Zbl 0341.53030.

\bibitem{so3}
R.~Souam.
\newblock On stable constant mean curvature surfaces in {${\mathbb S}^2 \times
  \R$} and {${\mathbb{H}}^2\times \R$}.
\newblock {\em Trans. Am. Math. Soc.}, 362(6):2845--2857, 2010.
\newblock MR2592938, Zbl 1195.53089.

\bibitem{tor1}
F.~Torralbo.
\newblock Rotationally invariant constant mean curvature surfaces in
  homogeneous $3$-manifolds.
\newblock {\em Differential Geometry and its Applications}, 28:523--607, 2010.
\newblock MR2670089, Zbl 1196.53040.

\bibitem{tou1}
F.~Torralbo and F.~Urbano.
\newblock Compact stable constant mean curvature surfaces in homogeneous
  $3$-manifolds.
\newblock {\em Indiana Univ. Math. Journal}, 61:1129--1156, 2012.
\newblock MR3071695, Zbl 1278.53065.

\end{thebibliography}
\end{document}